\documentclass[11pt]{article}

\setlength{\parskip}{0.1em}

\usepackage[margin=1in,letterpaper]{geometry}
\usepackage{microtype}
\usepackage{amssymb}
\usepackage{amsmath} 
\usepackage{mathtools}
\usepackage{tikz-cd}
\usepackage{amsthm}
\usepackage{thmtools, thm-restate}
\usepackage{accents}
\usepackage{authblk} 
\usepackage{hyperref} 
\hypersetup{
    colorlinks=true,
    linkcolor=blue,  
    urlcolor=blue,
    citecolor=blue
}
\usepackage{enumitem} 
\usepackage{graphicx} 
\usepackage{cleveref}
\let\cref\Cref
\usepackage{tikz}
\usepackage{pgfplots}
\pgfplotsset{compat=newest}
\usepackage{float}

\usepackage[
    backend=biber,
    style=alphabetic,
    maxnames = 99,
    url=true,      
    doi=true,      
    isbn=false,    
]{biblatex}

\AtEveryBibitem{%
  \iffieldundef{fjournal}
    {}
    {\clearfield{journal}%
     \fieldalias{fjournal}{journal}}%
}

\AtEveryBibitem{%
  \ifboolexpr{
    test {\ifentrytype{online}}
    or
    test {\ifentrytype{misc}}
  }
    {}
    {
      \iffieldundef{doi}
        {}
        {\clearfield{url}}%
    }%
}

\bibliography{ptc} 

\newtheorem{theorem}{Theorem}[section]
\newtheorem{lemma}[theorem]{Lemma}

\newtheorem{proposition}[theorem]{Proposition}

\theoremstyle{definition}
\newtheorem{definition}[theorem]{Definition}
\newtheorem{example}[theorem]{Example}

\newtheorem{remark}[theorem]{Remark}

\definecolor{darkblue}{rgb}{0.0, 0.0, 0.8}
\definecolor{darkred}{rgb}{0.8, 0.0, 0.0}
\definecolor{darkgreen}{rgb}{0.0, 0.8, 0.0}


\newcommand{\rmH}{\mathrm{H}}


\newcommand{\bbN}{\mathbb{N}}

\newcommand{\bbR}{\mathbb{R}}
\newcommand{\bbS}{\mathbb{S}}


\newcommand{\caC}{\mathcal{C}}
\newcommand{\caD}{\mathcal{D}}


\newcommand{\dgh}{d_\mathrm{GH}}
\newcommand{\dhi}{d_\mathrm{HI}}
\newcommand{\di}{d_\mathrm{I}}

\newcommand{\de}{d_\mathrm{E}}


\newcommand{\kernel}{\operatorname{ker}}
\newcommand{\VR}{\operatorname{VR}}
\newcommand{\im}{\operatorname{im}}
\newcommand{\id}{\operatorname{id}}


\newcommand{\diam}{\operatorname{diam}}
\newcommand{\F}{\Bbbk}
\newcommand{\field}{\Bbbk}

\newcommand{\TC}{\mathbf{TC}}
\newcommand{\cat}{\mathbf{cat}}

\newcommand{\secat}{\mathbf{secat}}

\newcommand{\cl}{\mathbf{cl}}
\newcommand{\zcup}{\mathbf{zcl}}
\newcommand{\invariant}{\mathbf{I}}
\newcommand{\nil}{\mathbf{nil}}

\usepackage{dutchcal}
\newcommand{\topo}{\mathcal{T\!o\!p}}
\newcommand{\Vect}{\mathcal{V\!e\!c}}
\newcommand{\Int}{\mathcal{I\!n\!t}}

\newcommand{\mor}{\operatorname{Mor}}
\newcommand{\ob}{\operatorname{Ob}}
\newcommand{\rp}{\mathbb{R}\mathrm{P}}
\newcommand{\wedgeS}{\vee^n \mathbb{S}}
\newcommand{\pathspace}{\operatorname{Path}}

\definecolor{dgmcolor}{RGB}{255,20,147}
\title{Persistence and Topological Complexity}

\author[1]{Facundo Mémoli}
\author[2]{Ling Zhou}
\affil[1]{Department of Mathematics, Rutgers University\\\texttt{facundo.memoli@rutgers.edu}}
\affil[2]{Department of Mathematics, Duke University\\\texttt{ling.zhou@duke.edu}}

\date{\today}

\begin{document}

\maketitle

\begin{abstract}
    Topological complexity is a homotopy invariant that measures the minimal number of continuous rules required for motion planning in a space. In this work, we introduce persistent analogs of topological complexity and its cohomological lower bound, the zero-divisor-cup-length, for persistent topological spaces, and establish their stability.
    For Vietoris--Rips filtrations of compact metric spaces, we show that the erosion distances between these persistent invariants are bounded above by twice the Gromov--Hausdorff distance.
    We also present examples illustrating that persistent topological complexity and persistent zero-divisor-cup-length can distinguish between certain spaces more effectively than persistent homology.
\end{abstract}

\tableofcontents

\section{Introduction}

Topological complexity, introduced by Farber~\cite{farber2003,farber2004instabilities}, is a homotopy invariant that quantifies the minimal number of continuous rules needed to specify a motion plan between any two points in a topological space. 

\paragraph*{Motion Planning Problem.}
Let \( X \) be a path-connected topological space. 
Define \( \pathspace(X) \) as the space of all continuous paths \(\gamma: [0,1] \to X\), equipped with the compact-open topology, and let \( p_X: \pathspace(X) \to X \times X \) be the map assigning to each path its pair of endpoints \( (\gamma(0), \gamma(1)) \).

The \emph{motion planning problem} seeks a map \( s: X \times X \to \pathspace(X) \), called a \emph{motion planner}, such that \( p_X \circ s = \id_{X \times X} \). 
In other words, it is a problem of finding a path for every pair of points in \( X \). 
Since \( X \) is path-connected, such a map \( s \) always exists when no further constraints are imposed. 
However, Farber showed in \cite[Theorem 1]{farber2003} that the existence of a continuous \( s \) implies that \( X \) is contractible.

\paragraph*{Topological Complexity.}
The requirement of contractibility, equivalently the existence of a continuous global section $s$, is restrictive in many practical and theoretical contexts. 
To quantify this obstruction, Farber introduced the \emph{topological complexity} \(\TC(X)\) as the minimal integer \( n \geq 0 \) such that \( X \times X \) admits a cover by \( n+1 \) open sets \( U_1,\dots, U_{n+1} \), each having a continuous local section \( s_i: U_i \to \pathspace(X) \) with \( p_X \circ s_i = \id_{U_i} \).
For instance, the topological complexity of a contractible space is zero.
Here, we use the \emph{reduced topological complexity} as defined in~\cite{farber2008invitation}, rather than Farber’s original definition from~\cite{farber2003}.  
This is because the reduced version, differing from the original by a shift of \(-1\), is consistent with the standard convention for sectional category, a notion we recall below.

\paragraph*{Connections to Sectional Category and LS-category.}

The definition of \(\TC(X)\) builds on the classical notion of the \emph{Schwarz\footnote{The name Schwarz is also translated as \v{S}varc.} genus} of a fibration \cite{schwarz1961genus}, also known as the \emph{sectional category} \cite{james1978category}. 
Farber’s formulation was inspired by the work of Smale~\cite{smale1987topology} and Vassil\'{i}ev~\cite{Vassilev1993}, who applied the sectional category to study the complexity of algorithms for solving polynomial equations, as discussed in Farber’s survey~\cite{farber2006topology}.
Given a fibration \( p: E \to B \), the sectional category of $p$ is the smallest integer \( n \) (or \( +\infty \)) such that \( B \) can be covered by \( n+1 \) open subsets \( U_1,\dots, U_{n+1} \), each admitting a continuous section \( s_i: U_i \to E \) satisfying \( p \circ s_i = \id_{U_i} \) \cite{schwarz1961genus}.

Farber identified the topological complexity \(\TC(X)\) of a space \( X \) with the sectional category of the path fibration \( p_X: \pathspace(X) \to X \times X \) \cite[page 214]{farber2004instabilities}. 

Historically, the sectional category was introduced as a generalization of the \emph{Lyusternik-Schnirelmann category (LS-category)} \cite{lusternik1934methodes}. 
In particular, the LS-category of a space $X$ coincides with the sectional category of the based path space fibration $p_{X,0}:\pathspace_0(X)\to X$ where $X$ is a based space, $\pathspace_0(X)$ consists of all paths in \(X\) starting from the base point, and $p_{X,0}$ maps each path $\gamma$ to $\gamma(1)$; see \cite[Section 6.1]{schwarz1961genus} or \cref{rmk:cat as secat}. 

Thus, both the topological complexity and the LS-category emerge as special cases of the more general sectional category.

\paragraph*{Cohomological Lower Bound for Sectional Category.}

The \emph{cup-length} is a well-known cohomological lower bound for the LS-category, with the origins of this lower bound being traceable to \cite{FroloffElsholz1935}. While \cite{FroloffElsholz1935} framed their results in terms of cycle intersections, these can be reinterpreted using cup products of cocycles, assuming a manifold setting and invoking Poincaré duality.

In analogy to the above idea, Schwarz proved in \cite[Theorem 4]{schwarz1961genus} that the sectional category similarly admits a cohomological lower bound. Specifically, the sectional category of a fibration is bounded below by the largest integer \( n \) for which there exist \( n \) cohomology classes \(\xi_1, \dots, \xi_n \in \ker(p^*: \rmH^*(B) \to \rmH^*(E))\) such that their cup product \(\xi_1 \smile \cdots \smile \xi_n\) is non-zero.
Here, \(\rmH^*(\cdot)\) denotes the cohomology ring of a space, and \(p^*\) is the induced map on cohomology.

In the context of topological complexity, the analogous cohomological lower bound is referred to as the \emph{zero-divisor-cup-length}, denoted \(\zcup(X)\) \cite{farber2003}. 
In other words, $\zcup(X) \leq \TC(X)$, and $\zcup(X)$ provides a powerful tool for estimating \(\TC(X)\).

\paragraph*{Persistent Invariants.}

Topological invariants play a central role in topological data analysis (TDA), where \emph{persistent homology} captures how homology evolves across a filtration of spaces~\cite{frosini1990distance,frosini1992measuring,robins1999towards,zomorodian2005computing,cohen2007stability,edelsbrunner2008persistent,carlsson2009topology,carlsson2020persistent}.

More broadly, \emph{persistent invariants} generalize this idea by tracking the evolution of topological invariants across scale. 
A fundamental example is the \emph{rank invariant} of a persistent module, where persistent homology arises as a primary case~\cite{carlsson2007theory}.
General frameworks for persistent invariants have been developed in~\cite{puuska2017erosion,bergomi2019rank,giunti2021amplitudes,memoli2024PersistentCup}.

Following~\cite{memoli2024PersistentCup}, a \emph{categorical invariant} on a category $\mathcal{C}$ is a function $\invariant: \operatorname{Ob}(\mathcal{C}) \sqcup \operatorname{Mor}(\mathcal{C}) \to \mathbb{N} \cup \{\infty\}$ satisfying two conditions: it assigns to each identity morphism the same value as its corresponding object, and it is non-increasing under composition. 
This notion encompasses a broad class of topological invariants, including the rank invariant~\cite
{carlsson2007theory}, cup-length \cite{contessoto_et_al:LIPIcs.SoCG.2022.31}, and LS-category \cite{memoli2024PersistentCup}.

We write \(\Int_\omega\) for the set of intervals of type \(\omega\), where \(\omega\) denotes one of the four types: open–open, open–closed, closed–open, or closed–closed. 
The set \(\Int_\omega\) is partially ordered by inclusion, and we regard it as a poset category where morphisms correspond to inclusions of intervals. 
For simplicity, we often use \(\Int_\omega\) to refer to both the underlying set and the associated poset category. 
Our results apply uniformly to all four types; for conciseness, we state them only for closed–closed intervals and omit explicit reference to \(\omega\) unless otherwise specified.

Given a functor \(X_\bullet: (\mathbb{R}, \leq) \to \mathcal{C}\), referred to as a persistent object, and a categorical invariant \(\invariant\), one obtains a \emph{persistent invariant} by assigning to each interval \([a,b] \in \Int\) the value \(\invariant(X_a \to X_b)\), where \(X_a \to X_b\) is the structure map in the filtration. This assignment defines a functor from the poset of intervals to a target poset determined by \(\invariant\), such as \((\mathbb{N} \cup \{\infty\}, \geq)\) in typical cases.

A fundamental result of \cite{memoli2024PersistentCup} shows that these persistent lifts are stable: the erosion distance (see \cref{def:de}) between the persistent invariants $\invariant(X_\bullet)$ and $\invariant(Y_\bullet)$ is bounded above by the interleaving distance (see \cref{def:interleaving}) between $X_\bullet$ and $Y_\bullet$.

\paragraph*{Gromov--Hausdorff distance.}

The Gromov--Hausdorff distance is a fundamental and well-studied notion of distance between compact metric spaces \cite{edwards1975structure,burago2001course,gromov2007metric}. 
While it provides a natural framework for comparing spaces up to metric isometry, its exact value is known only in a limited number of cases \cite{memoli2012some,ji2021gromov,adams2022gromov,talipov2022gromov,lim2021gromov,harrison2023quantitative,saul2024gromov,martin2024some}. 
As a result, much of the literature has focused on deriving effective lower and upper bounds; see \cite{lim2021gromov} for an overview.

Classical lower bounds for the Gromov--Hausdorff distance between compact metric spaces $X$ and $Y$ include the diameter bound $\tfrac{1}{2}|\diam(X) - \diam(Y)|$ (\cite[page 255]{burago2001course}) and bounds derived from persistent homology \cite{chazal2009gromov} among others \cite{memoli2012some}.
There are cases where spaces have identical diameters but differ in persistent homology, making the latter a sharper approximation.
For example, this occurs when comparing round spheres with their geodesic spaces of different dimensions; see \cite[Proposition 9.38]{lim2024vietoris}.

More recently, persistent invariants beyond persistent homology, have been employed to obtain sharper lower bounds on the Gromov--Hausdorff distance \cite{zhou2023beyond,zhou2023persistent,memoli2024persistenthomotopy,memoli2025ephemeral,medina2025persistent}. 
These invariants are also stable under perturbations and capture finer topological or algebraic information than persistent homology in certain cases.

\paragraph*{Our Contributions.}

In this work, we study a persistent analogue of topological complexity as well as its cohomological lower bound, the zero-divisor-cup-length. 
Our goal is to demonstrate that these richer invariants can offer greater discriminating power than persistent homology in distinguishing metric spaces, and hence yield sharper GH bounds.

A \emph{persistent (topological) space} is a functor $X_\bullet\colon (\mathbb{R}, \leq) \to \topo$, where $\topo$ denotes the category of topological spaces.
If $X_\bullet$ takes values in the full subcategory of $\topo$ consisting of CW complexes, i.e., if each $X_t$ is a CW complex, then $X_\bullet$ is called a \emph{persistent CW complex}.
For a persistent space \(X_\bullet\), we define the \emph{persistent topological complexity} \(\TC(X_\bullet)\) and the \emph{persistent zero-divisor-cup-length} \(\zcup(X_\bullet)\) by lifting their classical counterparts to functors from the poset of intervals to the extended natural numbers. 
Our construction builds on existing extensions of these invariants from spaces to maps~\cite{murillo2021topological, scott2022ls}. Specifically, we obtain functors \(\TC(X_\bullet), \zcup(X_\bullet): (\Int, \subseteq) \to (\mathbb{N} \cup \{\infty\}, \geq)\) by evaluating the invariants on the structure maps \(X_a \to X_b\). See \cref{def:p_TC} and \cref{def:p_zcup} for details.

The classical bound of topological complexity by zero-divisor-cup-length~\cite{farber2003, scott2022ls} extends to the persistent setting.
Below, by \cite{scott2022ls} (see also Proposition~\ref{prop:TC of a map}~\ref{prop:cohomology estimate}), if each \(X_t\) is an ANR, then
\[
\zcup(X_\bullet) \leq \TC(X_\bullet).
\]

We establish that the erosion distance $\de$ \cite{patel2018generalized} between these persistent invariants is stable under the homotopy interleaving distance $\dhi$ \cite{blumberg2017universality} of persistent spaces. 
In particular, for Vietoris--Rips filtrations of compact metric spaces, their erosion distances are bounded above by twice the Gromov--Hausdorff distance $\dgh$ \cite{edwards1975structure,gromov2007metric}.
\begin{restatable}[Stability of persistent $\TC$ and $\zcup$]{proposition}{stabilitytc}
\label{prop:stability of TC and zcup}
    Let $\invariant = \TC$ or $\zcup$. For two persistent CW complexes $X_\bullet$ and $Y_\bullet$, we have
    \begin{equation}\label{eq:dE-dHI-TC-zcup}
        d_{\mathrm{E}}(\invariant (X_\bullet),\invariant (Y_\bullet))\leq d_{\mathrm{HI}}(X_\bullet,Y_\bullet).
    \end{equation}
    As a consequence, for the Vietoris-Rips filtrations $\VR_\bullet(X)$ and $\VR_\bullet(Y)$ of compact metric spaces $X$ and $Y$, we have
    \begin{equation}\label{eq:dE-dGH-TC-zcup}
    d_{\mathrm{E}}\left(\invariant \left(\VR_\bullet (X)\right),\invariant \left(\VR_\bullet (Y)\right)\right)\leq 2\cdot d_{\mathrm{GH}}(X,Y). 
    \end{equation}
\end{restatable}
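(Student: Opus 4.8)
The plan is to reduce the metric statement \eqref{eq:dE-dGH-TC-zcup} to the abstract stability statement \eqref{eq:dE-dHI-TC-zcup}, and to prove the latter by the ``categorical invariant'' machinery recalled in the introduction. For \eqref{eq:dE-dHI-TC-zcup}: observe that both $\TC$ and $\zcup$, viewed as invariants of (homotopy classes of) maps between ANRs/CW complexes, are categorical invariants in the sense of \cite{memoli2024PersistentCup} — that is, they assign to an identity map the classical invariant of the space, and they are monotone non-increasing under composition of maps. This monotonicity is exactly what is packaged in the extensions of $\TC$ and $\zcup$ to maps due to \cite{murillo2021topological, scott2022ls} (cf.\ the paper's forthcoming Proposition~\ref{prop:TC of a map}). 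Granting this, the general principle from \cite{memoli2024PersistentCup} — that persistent lifts of categorical invariants are stable, with erosion distance bounded by interleaving distance — would normally give the bound with $\dhi$ replaced by the (strict) interleaving distance. The refinement to the \emph{homotopy} interleaving distance $\dhi$ \cite{blumberg2017universality} requires one extra observation: $\TC$ and $\zcup$ of a map depend only on the homotopy class of the map, so they are invariant under pre-/post-composition with homotopy equivalences; hence a homotopy interleaving (a zig-zag of objectwise weak/homotopy equivalences realizing an interleaving up to homotopy) induces, after passing to homotopy categories, an honest interleaving of the associated $(\Int,\subseteq)\to(\mathbb{N}\cup\{\infty\},\geq)$ functors, and the erosion-distance bound goes through verbatim.

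Concretely, I would unwind the definition of $\de$ (the paper's \cref{def:de}): an $\varepsilon$-homotopy-interleaving gives, for each interval $[a,b]$, factorizations of the relevant structure maps through the other filtration's value at the $\varepsilon$-thickened/shrunk interval, up to homotopy. Applying $\invariant$ and using both the homotopy-invariance and the composition-monotonicity of $\invariant$ yields the two-sided inequalities $\invariant(X_{a+\varepsilon}\to X_{b-\varepsilon}) \le \invariant(Y_a \to Y_b)$ and symmetrically, which is precisely $\de(\invariant(X_\bullet),\invariant(Y_\bullet)) \le \varepsilon$; infimizing over valid $\varepsilon$ gives \eqref{eq:dE-dHI-TC-zcup}. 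For the second inequality, I would invoke the standard comparison between the homotopy interleaving distance of Vietoris--Rips filtrations and the Gromov--Hausdorff distance: for compact metric spaces $X,Y$ one has $\dhi(\VR_\bullet(X),\VR_\bullet(Y)) \le 2\,\dgh(X,Y)$. This follows by taking a correspondence $R$ between $X$ and $Y$ with distortion $2\,\dgh(X,Y) + \delta$ and checking that the induced simplicial maps between $\VR_\bullet(X)$ and $\VR_\bullet(Y)$ (and the straight-line homotopies realizing the interleaving conditions) are well-defined once the scale parameter is shifted by the distortion — a by-now routine argument, essentially the Vietoris--Rips stability theorem of \cite{chazal2009gromov} phrased at the level of homotopy types. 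Chaining the two inequalities gives \eqref{eq:dE-dGH-TC-zcup}.

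The main obstacle is the first step done \emph{correctly}: verifying that $\TC$ and $\zcup$ of a map are genuinely monotone under composition and invariant under homotopy, with the right conventions (reduced vs.\ unreduced; the ANR hypothesis needed for the map-level $\zcup \le \TC$ bound and for the existence of good covers). One has to be careful that $\TC(f\circ g) \le \min(\TC(f),\TC(g))$ in the appropriate normalization and that this is what \cite{murillo2021topological, scott2022ls} actually prove — and that $\zcup$ of a map, defined via kernels of induced maps on cohomology rings, behaves the same way (a class pulling back to zero along $f\circ g$ pulls back to zero along $f$, and a nonzero cup product survives). Everything downstream — the erosion-distance bookkeeping and the Vietoris--Rips/Gromov--Hausdorff comparison — is then formal or standard; the only mild subtlety there is ensuring the homotopy-interleaving version (rather than the strict one) is what feeds into the stability principle, which is why I emphasized the homotopy-invariance of $\invariant$ above.
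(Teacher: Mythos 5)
Your strategy coincides with the paper's: establish that $\TC$ and $\zcup$ are categorical invariants that are homotopy-invariant on maps, then cite \cref{thm:main-stability} (Theorem~2 of \cite{memoli2024PersistentCup}) for the $\dhi$ bound and \cref{thm:stability-HI} for the passage to Gromov--Hausdorff, exactly as the paper does. Two slips in the write-up are worth noting. First, \cref{thm:main-stability} requires invariance under \emph{weak} homotopy equivalences, while \cref{prop:TC of a map} only gives invariance of $\TC$ under pre- and post-composition with genuine homotopy equivalences; the paper closes this gap via Whitehead's theorem, which is precisely why the proposition is restricted to persistent CW complexes. Your parenthetical ``between ANRs/CW complexes'' gestures at this but does not make the reduction explicit. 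Second, when you unwind the interleaving, the displayed inequality $\invariant\left(X_{a+\varepsilon}\to X_{b-\varepsilon}\right) \leq \invariant\left(Y_a\to Y_b\right)$ has the thickening on the wrong side: since $X_{a}\to X_{b+2\varepsilon}$ factors through $Y_{a+\varepsilon}\to Y_{b+\varepsilon}$, composition-monotonicity gives $\invariant\left(X_{a-\varepsilon}\to X_{b+\varepsilon}\right) \leq \invariant\left(Y_a\to Y_b\right)$, which is the erosion condition of \cref{def:de}; the version you wrote, with the shrunken interval on the left, is not derivable from the interleaving and already fails when $X_\bullet = Y_\bullet$ and $\invariant(X_\bullet)$ is nonconstant, since the functor is antitone under interval inclusion. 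With these corrected, your argument is the paper's proof.
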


Note that neither persistent topological complexity nor persistent zero-divisor-cup-length dominates the other, as demonstrated in \cref{ex:tc zcl cat cl}. This behavior parallels the case of persistent LS-category and persistent cup-length, where neither invariant dominates the other; see \cite[Example 2.34]{memoli2024PersistentCup}.

We show that persistent topological complexity and persistent zero-divisor-cup-length offer finer distinguishing power than persistent homology in certain cases.
Specifically, we consider the real projective space $\rp^n$ and the wedge of spheres $\wedgeS := \bigvee_{i=1}^n \bbS^i$ for $n\geq 2$, where $\rp^n$ is equipped with the quotient metric from the geodesic sphere of radius $2$ and $\wedgeS$ is equipped with the gluing metric~\cite{adamaszek2020homotopy}.
This pair of spaces was recently considered by the second author and Medina--Mardones in \cite{medina2025persistent} to demonstrate the stronger distinguishing power of persistent Steenrod modules, introduced in \cite{medina2022per_st}, compared to persistent homology.  
It was shown in \cite[Theorem D]{medina2025persistent} that the interleaving distance between their persistent homology modules satisfies
\begin{equation}\label{eq:homology_bound_rpn_S}
    \di\left( \rmH_k(\VR_\bullet(X)), \rmH_k(\VR_\bullet(Y)) \right) \leq \tfrac{\pi}{4},\quad \text{for all } k \geq 1,
\end{equation}
implying that persistent homology cannot yield a lower bound greater than \(\tfrac{\pi}{8}\) for the Gromov–Hausdorff distance \(\dgh(\rp^n, \, \wedgeS)\). To improve upon this, \cite{medina2025persistent} employed the persistent Steenrod modules introduced in \cite{medina2022per_st} for establishing a shaper lower bound of \(\tfrac{\pi}{6}\) for \(\dgh(\rp^n, \, \wedgeS)\).

We perform a similar analysis using the erosion distance between the persistent topological complexity and the persistent zero-divisor-cup-length.
This approach also yields a lower bound of \(\tfrac{\pi}{6}\) for \(\dgh(\rp^n, \wedgeS)\), matching the result obtained via persistent Steenrod modules \cite[Theorem D]{medina2025persistent}.

Using persistent topological complexity and persistent zero-divisor-cup-length, we recover the same lower bound on the Gromov--Hausdorff distance obtained in~\cite{medina2025persistent}.
More precisely, we prove the following result in \cref{subsec:stability}.

\begin{restatable}
{theorem}{rpnvswedge}
\label{prop:erosion-comp}
Let $\invariant = \TC$ or $\zcup$.
For the real projective space \(\rp^n\) and the wedge sum \(\wedgeS\), we have 
\begin{equation}\label{eq:erosion-comp}
    \tfrac{\pi}{3}\leq d_{\mathrm{E}}\left(\invariant(\VR_\bullet(\rp^n)),\invariant(\VR_\bullet(\wedgeS))\right)\leq 2\cdot d_{\mathrm{GH}}\left(\rp^n, \, \wedgeS\right).
\end{equation}
\end{restatable}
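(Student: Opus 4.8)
The plan is to establish the two inequalities in \eqref{eq:erosion-comp} separately, with the right-hand inequality being essentially immediate and the left-hand inequality requiring genuine computation.

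For the upper bound, I would simply invoke \cref{prop:stability of TC and zcup}, specifically \eqref{eq:dE-dGH-TC-zcup}, applied to the compact metric spaces $X = \rp^n$ and $Y = \wedgeS$ (noting that their Vietoris--Rips complexes are homotopy equivalent to CW complexes, so the hypotheses are met). This gives $\de(\invariant(\VR_\bullet(\rp^n)), \invariant(\VR_\bullet(\wedgeS))) \leq 2\,\dgh(\rp^n, \wedgeS)$ with no further work.

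For the lower bound $\tfrac{\pi}{3} \leq \de(\invariant(\VR_\bullet(\rp^n)), \invariant(\VR_\bullet(\wedgeS)))$, the strategy is to exhibit an explicit interval on which the two persistent invariants differ by a large amount, then translate that separation into an erosion-distance bound. Concretely, I would proceed as follows. First, recall the known homotopy types of the Vietoris--Rips filtrations: $\VR_r(\rp^n)$ (with the radius-$2$ quotient metric) recovers $\rp^n$ itself, and in fact one knows $\VR_r(\rp^n) \simeq \rp^n$ for small $r$ and tracks through a sequence of homotopy types as $r$ grows (via the work on Vietoris--Rips of projective spaces / round spheres), while $\VR_r(\wedgeS)$ with the gluing metric recovers $\wedgeS = \bigvee_{i=1}^n \bbS^i$ for small $r$ and eventually becomes contractible. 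Second, compute the classical invariants: for $\rp^n$ one has $\zcup(\rp^n) = n = \TC(\rp^n)$ when $n$ is a power of $2$ (Farber--Tabachnikov--Yuzvinsky), and in general $\zcup(\rp^n) \geq \lceil \log_2(n+1) \rceil$-type bounds — more importantly, $\zcup(\rp^n) \geq n$ fails in general but $\zcup(\rp^n)$ is still large; for the wedge $\wedgeS$, since it is a suspension-like space with all cup products of positive-degree classes vanishing, one gets $\zcup(\wedgeS)$ and $\TC(\wedgeS)$ controlled by the cup-length of $\wedgeS \times \wedgeS$, which is small (the cohomology of a wedge of spheres has trivial products among reduced classes, so zero-divisors behave very differently). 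Third, because the persistent invariants are functors $(\Int, \subseteq) \to (\mathbb{N}\cup\{\infty\}, \geq)$ evaluated on structure maps, I would identify a scale parameter $r^\ast$ — governed by the first homotopy-type transition of $\VR_\bullet(\rp^n)$, which the literature places at $r^\ast$ corresponding to an angular radius of $\tfrac{\pi}{3}$ on the relevant sphere — such that $\invariant(\VR_\bullet(\rp^n))$ on intervals of length just below $r^\ast$ still sees the large value coming from $\rp^n$, whereas $\invariant(\VR_\bullet(\wedgeS))$ on every interval is bounded by the smaller wedge value. Finally, unwinding \cref{def:de}: the erosion distance is at least $\tfrac{\varepsilon}{2}$-type quantities — more precisely, if on some interval $[a,b]$ with $b - a < \tfrac{\pi}{3}$ the two invariants disagree in the required monotone sense, then no $\varepsilon$-erosion with $\varepsilon < \tfrac{\pi}{3}$ can equalize them, forcing $\de \geq \tfrac{\pi}{3}$. (Here the factor matching $\tfrac{\pi}{3}$ rather than $\tfrac{\pi}{6}$ on the erosion side is consistent with the factor-$2$ discrepancy between $\de$ and $\dgh$ appearing in \eqref{eq:dE-dGH-TC-zcup}, which is exactly why this recovers the $\tfrac{\pi}{6}$ Gromov--Hausdorff bound of \cite{medina2025persistent}.)

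The main obstacle is the middle computational step: pinning down precisely the first scale $r^\ast$ at which $\VR_\bullet(\rp^n)$ departs from the homotopy type of $\rp^n$, and verifying that $\TC$ (equivalently $\zcup$) of the relevant intermediate homotopy types — together with the $\TC$/$\zcup$ of all homotopy types appearing in $\VR_\bullet(\wedgeS)$ — genuinely produce a gap of the claimed size on an interval of length strictly less than $\tfrac{\pi}{3}$. This requires carefully combining (i) the known Vietoris--Rips homotopy-type results for spheres and projective spaces, (ii) the behavior of $\TC$ and $\zcup$ under the relevant maps (using \cref{prop:TC of a map} and the functoriality of the persistent invariants), and (iii) the cohomology-ring computations for $\rp^n \times \rp^n$ versus $\wedgeS \times \wedgeS$ to certify the zero-divisor-cup-length values. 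The subtlety is that the persistent invariants depend on structure \emph{maps}, not just on the homotopy types of individual fibers, so I would need to track whether the maps $\VR_a(\rp^n) \to \VR_b(\rp^n)$ are cohomologically injective/surjective enough to preserve the nonvanishing zero-divisor product up to scale $r^\ast$; this is where invoking the map-version results of \cite{murillo2021topological, scott2022ls} cited in the excerpt becomes essential.
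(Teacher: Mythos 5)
Your proposal follows the same overall route as the paper (upper bound from \cref{prop:stability of TC and zcup}, lower bound from comparing the two persistent invariants on their Vietoris--Rips filtrations), but the lower-bound argument as you sketch it is imprecise in a way that matters. The paper's proof proceeds via two precise steps. First, \cref{lem:TC_zcup_RPn_wedge} shows that $\invariant(\VR_\bullet(\rp^n))(J) = \invariant(\rp^n) > 2$ for all intervals $J \subset (0,\tfrac{2\pi}{3})$ and vanishes for $J \not\subset [0,\pi]$, while $\invariant(\VR_\bullet(\wedgeS))(J) = 2$ for $J \subset (0,\zeta_n)$ with $\zeta_n = \arccos(-\tfrac{1}{n+1}) \in (\tfrac{\pi}{2}, \tfrac{2\pi}{3})$, and vanishes for $J \not\subset [0,\pi]$. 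The concern you raise about whether the structure maps ``are cohomologically injective/surjective enough to preserve the nonvanishing zero-divisor product'' is in fact a non-issue: on the range $(0, \tfrac{2\pi}{3})$ the structure maps of $\VR_\bullet(\rp^n)$ are homotopy equivalences, so $\invariant$ of a structure map equals $\invariant(\rp^n)$ outright, with no delicate tracking needed. Second, \cref{prop:de_value} Item (1) converts these data into the erosion lower bound: for a short interval $J$ centered near $\tfrac{\pi}{3}$, the two-sided thickening $J^\epsilon = [a-\epsilon, b+\epsilon]$ stays inside $(0,\tfrac{2\pi}{3})$ precisely when $\epsilon$ is (approximately) less than $\tfrac{\pi}{3}$, so $g(J) = 2 < \invariant(\rp^n) = f(J^\epsilon)$ and the $\epsilon$-erosion inequality $g(J) \geq f(J^\epsilon)$ fails, giving $\de \geq \tfrac{a_f}{2} = \tfrac{\pi}{3}$. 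Your framing --- ``if on some interval $[a,b]$ with $b-a<\tfrac{\pi}{3}$ the two invariants disagree \ldots forcing $\de \geq \tfrac{\pi}{3}$'' --- conflates the length of the probing interval (which may be taken arbitrarily small) with the bound on $\epsilon$; and your attribution of the halving from $\tfrac{2\pi}{3}$ to $\tfrac{\pi}{3}$ to the factor-$2$ discrepancy between $\de$ and $\dgh$ is misplaced, since that halving actually comes from the two-sided nature of the $\epsilon$-thickening in \cref{def:de}, while the $\dgh$ factor produces a \emph{further} halving to $\tfrac{\pi}{6}$. Two smaller inaccuracies: the claim that ``$\invariant(\VR_\bullet(\wedgeS))$ on every interval is bounded by the smaller wedge value'' is both stronger than \cref{lem:TC_zcup_RPn_wedge} establishes (the values on the gray region are undetermined) and unnecessary for the argument; and the stated equality $\zcup(\rp^n) = n = \TC(\rp^n)$ for $n$ a power of $2$ is not the Farber--Tabachnikov--Yuzvinsky result (for $n = 2^k$ one has $\TC(\rp^n) = 2n-1$), though all the proof actually needs is $\invariant(\rp^n) > 2 = \invariant(\wedgeS)$ for $n \geq 2$, as in the paper's \cref{ex:tc of rpn} and \cref{ex:tc of bouquets}.
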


Since these invariants are conceptually and technically distinct from Steenrod modules, our results demonstrate the utility of alternative topological constructions in deriving geometric bounds.
We do not claim that the bound $\tfrac{\pi}{6}$ surpasses all known lower bounds.
Indeed, the diameter bound $\tfrac{1}{2}|\diam(\rp^n) - \diam(\wedgeS)| = \tfrac{1}{2}|\pi - 2\pi| = \tfrac{\pi}{2}$ is strictly greater, illustrating the nuanced interplay between different types of invariants.
Rather, our goal is to demonstrate that persistent topological complexity and persistent zero-divisor-cup-length can distinguish spaces more effectively than persistent homology, thereby capturing information that homological invariants alone may miss.

\begin{figure}
\centering
\begin{tikzpicture}[scale=0.66]
\begin{axis} [ 
title = {\Large $\TC(\VR_\bullet(\rp^n))$ or $\zcup(\VR_\bullet(\rp^n))$},
ticklabel style = {font=\Large},
axis y line=middle, 
axis x line=middle,
ytick={0.5,0.67,0.95},
yticklabels={$\tfrac{\pi}{2}$, $\tfrac{2\pi}{3}$,$\pi$},
xtick={0.5,0.67,0.95},
xticklabels={$\tfrac{\pi}{2}$,$\tfrac{2\pi}{3}$, $\pi$},
xmin=0, xmax=1.1,
ymin=0, ymax=1.1,]
\addplot [mark=none,color=dgmcolor!60!white] coordinates {(0,0) (1,1)};
\addplot [thick,color=dgmcolor!60!white,fill=dgmcolor!60!white, 
                fill opacity=0.45]coordinates {
        (0,.67) 
        (0,0)
        (.67,.67)
        (0,.67)};
\addplot [thick,color=black!10!white,fill=black!10!white, 
                fill opacity=0.4]coordinates {
        (0,0.95)
        (0,0.67)
        (0.67,0.67)
        (0.95,0.95)
        }; 
\node[mark=none] at (axis cs:.25,.45){\Large{$>2$}};
\end{axis}
\end{tikzpicture}
\hspace{1.5cm}
\begin{tikzpicture}[scale=0.65]
\begin{axis} [ 
title = {\Large $\TC(\VR_\bullet(\wedgeS))$ or $\zcup(\VR_\bullet(\wedgeS))$},
ticklabel style = {font=\Large},
axis y line=middle, 
axis x line=middle,
ytick={0.5,0.67,0.95},
yticklabels={$\tfrac{\pi}{2}$,$\tfrac{2\pi}{3}$,$\pi$},
xtick={0.5,0.6,0.67,0.95},
xticklabels={$\tfrac{\pi}{2}$,$\zeta_n$, $\tfrac{2\pi}{3}$, $\pi$},
xmin=0, xmax=1.1,
ymin=0, ymax=1.1,]
\addplot [mark=none] coordinates {(0,0) (1,1)};
\addplot [thick,color=dgmcolor!40!white,fill=dgmcolor!40!white, 
                fill opacity=0.45]coordinates {
        (0,0.6)
        (0,0)
        (0.6,0.6)
        (0,0.6)};
\addplot [thick,color=black!10!white,fill=black!10!white, 
                fill opacity=0.4]coordinates {
        (0,0.95)
        (0,0.6)
        (0.6,0.6)
        (0.95,0.95)
        };         
\node[mark=none] at (axis cs:.25,.45){\Large{$2$}};
\end{axis}
\end{tikzpicture}
\caption{
The persistent invariants $\invariant(\VR_\bullet(\rp^n))$ (left) and $\invariant(\VR_\bullet(\wedgeS))$ (right), where $\invariant = \TC$ or $\zcup$, $n \geq 2$, and $\zeta_n := \arccos\big(-\tfrac{1}{n+1}\big)$. 
Gray regions indicate undetermined values.
See \cref{lem:TC_zcup_RPn_wedge} for details.
}
\label{fig:tc_zcl}
\end{figure}

Our analysis of the erosion distance in \cref{prop:erosion-comp} has two components.

First, we estimate the persistent topological complexity and persistent zero-divisor-cup-length of $\rp^n$ and $\wedgeS$ based on the structure of their Vietoris--Rips filtrations; see \cref{lem:TC_zcup_RPn_wedge} and \cref{fig:tc_zcl}.
This step uses known results about the VR complexes of $\rp^n$, spheres, and wedge sums of spheres (see~\cite{adams2022metric, lim2024vietoris, adamaszek2020homotopy}).
It is worth noting that a similar analysis applies, and that \cref{eq:erosion-comp} remains valid, when the invariant \( \invariant\) is taken to be the persistent LS-category or persistent cup-length; see \cref{rmk:same_for_cat} for details.

Second, we estimate the erosion distance using \cref{prop:de_value}, which abstracts and generalizes the argument originally developed in~\cite[Proposition 3.7]{memoli2024PersistentCup} to estimate the erosion distance between the persistent LS-categories (and persistent cup-length invariants) of a specific pair of metric spaces.

\paragraph*{Organization.}

In \cref{sec:prelim}, we review the necessary background on persistence theory, persistent invariants, and classical topological invariants, including LS-category, sectional category, and their cohomological lower bounds. 
In \cref{sec:TC}, we review topological complexity and zero-divisor-cup-length, along with their properties and examples.
In \cref{sec:persistent_TC}, we define the persistent analogues of topological complexity and zero-divisor-cup-length, and establish their stability by proving \cref{prop:stability of TC and zcup}.
In particular, in \cref{subsec:stability}, we demonstrate that these invariants can distinguish real projective spaces and wedge sums of spheres more effectively than persistent homology, by proving \cref{prop:erosion-comp}.

\paragraph*{Acknowledgement.}
This work was partially supported by NSF DMS \#2301359, NSF CCF \#2310412 and NSF RI \#1901360.
L.Z. gratefully acknowledges the support of the AMS-Simons Travel Grants.

\section{Preliminaries}
\label{sec:prelim}

In this section, we review the necessary background on persistence theory, topological invariants, and their persistent analogues. 
This includes the categorical framework for defining and comparing persistent invariants, as well as the relevant notions of interleaving distance, homotopy interleaving, and stability. 
Our presentation follows \cite{bubenik2015metrics, blumberg2017universality, memoli2024PersistentCup, cornea2003lusternik}.

\subsection{Persistence Theory}

Let \(\caC\) be a small category. 
A \emph{persistent object} in \(\caC\) is defined as a functor \(F_\bullet : (\mathbb{R}, \leq) \to \caC\), where \((\mathbb{R}, \leq)\) is the poset category of real numbers ordered by the usual total order. In other words, a persistent object assigns to each real number \(t\) an object \(F_t \in \caC\), and to each pair \(t \leq s\), a morphism \(f_{t,s}: F_t \to F_s\), such that
\[
f_{t,t} = \id_{F_t} \quad \text{and} \quad f_{s,r} \circ f_{t,s} = f_{t,r} \quad \text{for all } t \leq s \leq r.
\]

In this work, we focus on the following choices for the category \(\caC\). Let \(\field\) be a field. We denote by \(\Vect\) the category of vector spaces over \(\F\), in which persistent objects are referred to as \emph{persistence vector spaces} or \emph{persistence modules}. We also consider \(\topo\), the category of compactly generated weakly Hausdorff topological spaces (following the convention of \cite{blumberg2017universality}), where persistent objects are called \emph{persistent (topological) spaces}.


\paragraph{Interleaving Distance.}

Let \(F_\bullet\) and \(G_\bullet\) be two persistent objects in \(\caC\). A \emph{natural transformation} \(\varphi: F_\bullet \Rightarrow G_\bullet\) (also called a homomorphism) consists of morphisms \(\{\varphi_t: F_t \to G_t\}_{t \in \mathbb{R}}\) satisfying the following commutative diagrams for all $t \leq s$:
\[
\begin{tikzcd}
F_t \ar[r, "f_{t,s}"] \ar[d, "\varphi_t"'] & F_s \ar[d, "\varphi_s"] \\
G_t \ar[r, "g_{t,s}"] & G_s
\end{tikzcd}
\]
If each \(\varphi_t\) is an isomorphism, we say the two persistent objects are \emph{naturally isomorphic}, denoted \(F_\bullet \cong G_\bullet\).

For any $\delta\geq 0$, a \emph{\(\delta\)-interleaving} between two persistent objects \(F_\bullet\) and \(G_\bullet\) consists of two families of morphisms:
\[
\{\varphi_t: F_t \to G_{t+\delta}\}_{t\in \mathbb{R}}, \quad \{\phi_t: G_t \to F_{t+\delta}\}_{t\in \mathbb{R}},
\]
such that the following diagrams commute for all $t\leq s$:
	\begin{center}
		\begin{tikzcd}[column sep={6em,between origins}]
			F_t \ar[dr, "\varphi_t" below left] 	
			\ar[r,"f_{t,s}"]
			& 
			F_s			
			\ar[dr,"\varphi_s " ]&
			\\
			& G_{t+\delta}
			\ar[r,"g_{t+\delta,s+\delta}" below]
			& 
			 G_{s+\delta}
		\end{tikzcd}
\hspace{0.6cm}
	\begin{tikzcd}[column sep={6em,between origins}]
			&  F_{t+\delta}
			\ar[r,"f_{t+\delta,s+\delta}"]
			& 
			 F_{s+\delta}
			\\
			G_t
			\ar[ur, "\phi_t"] 
			\ar[r,"G_{t,s}" below]
			& 
			 G_{s}
			\ar[ur,"\phi_s" below right]&
		\end{tikzcd}
	\end{center} 
and
\begin{center}
		\begin{tikzcd}
			F_t
			\ar[dr, "\varphi_t" below left ] 
			\ar[rr,"f_{t,t+2\delta}"]%
			&& 
			 F_{t+2\delta}		
			\\
			&  G_{t+\delta}
			\ar[ur,"\phi_{t+\delta}" below right ]
			& 
		\end{tikzcd}
\hspace{0.6cm}
		\begin{tikzcd}
			&  F_{t+\delta}
			\ar[dr,"\varphi_{t+\delta}"]
			& 
			\\
			G_t
			\ar[ur, "\phi_t"] 
			\ar[rr,"G_{t,t+2\delta}" below]
			&& 
			 G_{t+2\delta}.	
		\end{tikzcd}
	\end{center}

\begin{definition} \label{def:interleaving}
    The \textbf{interleaving distance} between \(F_\bullet\) and \(G_\bullet\) is defined as:
    \[
    \di(F_\bullet, G_\bullet) := \inf\{ \delta \geq 0 \mid F_\bullet \text{ and } G_\bullet \text{ are } \delta\text{-interleaved} \},
    \]
    with the convention \(\inf \emptyset = \infty\). 
    When needed, we write \(\di^{\caC}\) to emphasize the underlying category.
\end{definition}

The interleaving distance is non-increasing under post-composition with functors, a property that will be used in the stability results we will discuss later:

\begin{theorem}[{\cite[Section 2.3]{bubenik2015metrics}}]
\label{thm:Bubenik}
For functors $ F_\bullet, G_\bullet:(\bbR,\leq)\to\caC$ and $\rmH:\caC\to \caD$,  
\[ d_{\invariant}^{\caD}(\rmH \circ F_\bullet,\rmH \circ G_\bullet)\leq d_{\invariant}^{\caC}( F_\bullet,G_\bullet).\]
\end{theorem}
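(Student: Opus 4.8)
The final statement to prove is \cref{thm:Bubenik}, the functoriality/stability of the interleaving distance under post-composition.

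\textbf{Plan of proof.} The plan is to show that any $\delta$-interleaving between $F_\bullet$ and $G_\bullet$ in $\caC$ is carried by the functor $\rmH$ to a $\delta$-interleaving between $\rmH\circ F_\bullet$ and $\rmH\circ G_\bullet$ in $\caD$; taking the infimum over admissible $\delta$ then yields the inequality. Fix $\delta \geq 0$ and suppose $F_\bullet$ and $G_\bullet$ are $\delta$-interleaved via families $\{\varphi_t : F_t \to G_{t+\delta}\}_{t\in\bbR}$ and $\{\phi_t : G_t \to F_{t+\delta}\}_{t\in\bbR}$. I would define the candidate interleaving morphisms in $\caD$ by $\rmH(\varphi_t) : \rmH(F_t) \to \rmH(G_{t+\delta})$ and $\rmH(\phi_t) : \rmH(G_t) \to \rmH(F_{t+\delta})$.

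The key step is to verify the three families of commuting triangles required by the definition of a $\delta$-interleaving for these images. This is immediate from functoriality of $\rmH$: applying $\rmH$ to each of the six commutative triangles in the definition of a $\delta$-interleaving of $F_\bullet$ and $G_\bullet$ produces the corresponding six triangles for $\rmH\circ F_\bullet$ and $\rmH\circ G_\bullet$, using that $\rmH$ sends structure maps of $F_\bullet$ (resp.\ $G_\bullet$) to structure maps of $\rmH\circ F_\bullet$ (resp.\ $\rmH\circ G_\bullet$) and preserves composition. For instance, the triangle witnessing $\phi_{t+\delta}\circ\varphi_t = f_{t,t+2\delta}$ becomes $\rmH(\phi_{t+\delta})\circ\rmH(\varphi_t) = \rmH(\phi_{t+\delta}\circ\varphi_t) = \rmH(f_{t,t+2\delta})$, which is exactly the structure map of $\rmH\circ F_\bullet$ from index $t$ to index $t+2\delta$; the other five triangles are analogous. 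Hence $\rmH\circ F_\bullet$ and $\rmH\circ G_\bullet$ are $\delta$-interleaved.

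It follows that $\{\delta \geq 0 \mid F_\bullet, G_\bullet \text{ are } \delta\text{-interleaved}\} \subseteq \{\delta \geq 0 \mid \rmH\circ F_\bullet, \rmH\circ G_\bullet \text{ are } \delta\text{-interleaved}\}$, and taking infima (with the convention $\inf\emptyset = \infty$) gives $\di^{\caD}(\rmH\circ F_\bullet, \rmH\circ G_\bullet) \leq \di^{\caC}(F_\bullet, G_\bullet)$, as claimed. There is essentially no obstacle here — the statement is a soft, formal consequence of functoriality — so the only thing requiring any care is bookkeeping: making sure that each of the six triangle diagrams is transported correctly and that $\rmH$ indeed sends the persistent object $F_\bullet$ to a genuine persistent object (which holds since a composite of functors $(\bbR,\leq)\xrightarrow{F_\bullet}\caC\xrightarrow{\rmH}\caD$ is again a functor). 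One may also note in passing that the same argument applies verbatim to any $\delta$ and to all four interval types, and that the result is stated in \cite[Section 2.3]{bubenik2015metrics}, so a short self-contained argument along the above lines suffices.
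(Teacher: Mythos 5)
Your proof is correct and is exactly the standard argument; the paper itself does not prove this statement but cites it directly to \cite[Section 2.3]{bubenik2015metrics}, and what you wrote is precisely the functoriality argument that reference (and any other source) gives. One tiny bookkeeping slip: the definition of a $\delta$-interleaving as stated in this paper involves four commuting diagrams (two parallelograms expressing compatibility of $\varphi$ and $\phi$ with the structure maps, and two triangles expressing that the round trips recover $f_{t,t+2\delta}$ and $g_{t,t+2\delta}$), not six triangles as you say; this does not affect the argument, since applying the functor $\rmH$ preserves commutativity of each of them regardless of how you count.
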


\paragraph*{Vietoris–Rips filtration.}

Let \((X, d_X)\) be a metric space and let \(t > 0\). The \emph{Vietoris–Rips complex} \(\VR_t(X)\) is the simplicial complex with vertex set \(X\), in which a finite subset \(\sigma \subset X\) spans a simplex if and only if \(\diam(\sigma) < t\). When there is no risk of ambiguity, we will use the same notation to refer to both a simplicial complex and its geometric realization.

The family \(\{\VR_t(X)\}_{t > 0}\), equipped with the natural simplicial inclusions \(\VR_t(X) \hookrightarrow \VR_s(X)\) for \(t \leq s\), defines a persistent space 
denoted by \(\VR_\bullet(X)\). We refer to this as the \emph{Vietoris–Rips filtration} of \(X\). Throughout this paper, we adopt the open condition \(\diam(\sigma) < t\); however, all results remain valid under the closed condition \(\diam(\sigma) \leq t\) as well.

Applying the \(k\)-th homology with field coefficients to the filtration \(\VR_\bullet(X)\) yields a persistence module, denoted \(\rmH_k(\VR_\bullet(X))\) and called the \emph{\(k\)-th persistent homology} of \(X\).

\paragraph*{Gromov–Hausdorff distance.}

Let \(X, Y\) be subsets of a metric space \(Z\). The \emph{Hausdorff distance} between \(X\) and \(Y\) in \(Z\) is given by
\[
d_\mathrm{H}^Z(X, Y) := \inf\left\{ r > 0 \mid X \subseteq B(Y, r) \text{ and } Y \subseteq B(X, r) \right\},
\]
where \(B(A, r)\) denotes the open \(r\)-neighborhood of \(A\) in \(Z\).

\begin{definition}
The \textbf{Gromov–Hausdorff distance} between compact metric spaces \((X, d_X)\) and \((Y, d_Y)\) is defined by
\[
\dgh(X, Y) := \inf_{Z, \psi_X, \psi_Y} d_\mathrm{H}^Z(\psi_X(X), \psi_Y(Y)),
\]
where the infimum is taken over all metric spaces \(Z\) and isometric embeddings \(\psi_X: X \to Z\) and \(\psi_Y: Y \to Z\).
\end{definition}

For further background on the Gromov-Hausdorff distance, see \cite{edwards1975structure,gromov2007metric}.

\paragraph*{Homotopy interleaving distance.}

Blumberg and Lesnick observed in \cite{blumberg2017universality} that the interleaving distance between persistent spaces is not homotopy invariant. Motivated by this, they introduced a refined notion based on homotopy classes of persistent spaces.

Let \( X_\bullet ,  Y_\bullet : (\mathbb{R}, \leq) \to \topo\) be persistent spaces. A natural transformation \(\varphi:  X_\bullet  \Rightarrow  Y_\bullet \) is called an \emph{objectwise weak equivalence} if each component \(\varphi_t: X_t \to Y_t\) is a weak homotopy equivalence (i.e., it induces isomorphisms on all homotopy groups).
We say that \( X_\bullet \) and \( Y_\bullet \) are \emph{weakly equivalent}, denoted \( X_\bullet  \simeq  Y_\bullet \), if there exists a third persistent space \(W_\bullet\) and weak equivalences \(W_\bullet \to  X_\bullet \) and \(W_\bullet \to  Y_\bullet \).

For \(\delta \geq 0\), two persistent spaces \( X_\bullet \) and \( Y_\bullet \) are said to be \emph{\(\delta\)-homotopy-interleaved} if there exist weakly equivalent persistent spaces \( X_\bullet ' \simeq  X_\bullet \) and \( Y_\bullet ' \simeq  Y_\bullet \) such that \( X_\bullet '\) and \( Y_\bullet '\) are \(\delta\)-interleaved (as persistent objects in \(\topo\)).

\begin{definition}[\cite{blumberg2017universality}]
The \textbf{homotopy interleaving distance} $\dhi$ between persistent spaces \( X_\bullet \) and \( Y_\bullet \) is defined as the infimum $\delta\geq 0$ such that 
$X_\bullet  \text{ and }  Y_\bullet  \text{ are } \delta\text{-homotopy-interleaved}$.
Equivalently,
\[
\dhi( X_\bullet ,  Y_\bullet ) = \inf\left\{ \di^\topo( X_\bullet ',  Y_\bullet ') \mid  X_\bullet ' \simeq  X_\bullet , \,  Y_\bullet ' \simeq  Y_\bullet  \right\}.
\]
\end{definition}

Furthermore, they showed that the homotopy interleaving distance satisfies the following stability result.

\begin{theorem}[\cite{blumberg2017universality}]
\label{thm:stability-HI}
Let \(X\) and \(Y\) be totally bounded metric spaces. Then, for any \(k \in \mathbb{Z}_{\geq 0}\),
\[
\di\left( \rmH_k(\VR_\bullet(X)), \rmH_k(\VR_\bullet(Y)) \right) \leq \dhi\left( \VR_\bullet(X), \VR_\bullet(Y) \right) \leq 2 \cdot \dgh(X, Y).
\]
\end{theorem}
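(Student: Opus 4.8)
The plan is to prove this in two independent pieces, corresponding to the two inequalities, and then chain them.

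For the right-hand inequality $\dhi(\VR_\bullet(X), \VR_\bullet(Y)) \le 2\,\dgh(X,Y)$, I would argue directly from the definition of Gromov–Hausdorff distance via correspondences. Fix $\eta > 0$ and a correspondence $R \subseteq X \times Y$ with distortion $\operatorname{dis}(R) < 2\,\dgh(X,Y) + \eta$; write $\delta$ for $\tfrac12\operatorname{dis}(R)$. The standard trick is to form the metric space $Z = X \sqcup Y$ with a metric extending $d_X$ and $d_Y$ where $d(x,y) := \inf\{d_X(x,x') + \delta + d_Y(y',y) : (x',y') \in R\}$ for $x \in X$, $y \in Y$; this is a pseudometric on the disjoint union in which $X$ and $Y$ sit isometrically and every point of one is within $\delta$ of the other. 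Then $\VR_\bullet(X)$ and $\VR_\bullet(Y)$ both include into $\VR_\bullet(Z)$, and the key point is that $\VR_t(Z)$ deformation retracts (or at least is weakly equivalent, compatibly in $t$) onto $\VR_t(X)$ after a $\delta$-shift: more precisely the inclusions $\VR_t(X) \hookrightarrow \VR_{t+2\delta}(Z)$ together with a contiguity-type map $\VR_t(Z) \to \VR_{t+2\delta}(X)$ sending each vertex of $Z$ to a chosen $R$-partner provide a $2\delta$-interleaving at the simplicial-homotopy level. Passing through homotopy equivalences of the realizations, one gets that $\VR_\bullet(X)$ and $\VR_\bullet(Y)$ are $2\delta$-homotopy-interleaved, so $\dhi \le 2\delta < 2\,\dgh(X,Y) + \eta$; let $\eta \to 0$. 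This is essentially the argument of Chazal–de Silva–Oudot and Blumberg–Lesnick, so I would cite \cite{blumberg2017universality} and only sketch the correspondence-to-interleaving translation.

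For the left-hand inequality $\di(\rmH_k(\VR_\bullet(X)), \rmH_k(\VR_\bullet(Y))) \le \dhi(\VR_\bullet(X), \VR_\bullet(Y))$, the point is that $\rmH_k$ is a homotopy-invariant functor, so it is constant on weak-equivalence classes of persistent spaces and takes $\delta$-interleavings to $\delta$-interleavings. Concretely: if $X_\bullet' \simeq \VR_\bullet(X)$ and $Y_\bullet' \simeq \VR_\bullet(Y)$ with $X_\bullet'$ and $Y_\bullet'$ being $\delta$-interleaved in $\topo$, then applying $\rmH_k$ and invoking \cref{thm:Bubenik} (functoriality of $\di$ under post-composition) gives $\di(\rmH_k(X_\bullet'), \rmH_k(Y_\bullet')) \le \di^\topo(X_\bullet', Y_\bullet') \le \delta$; and since weak equivalences induce isomorphisms on homology, $\rmH_k(X_\bullet') \cong \rmH_k(\VR_\bullet(X))$ and likewise for $Y$, so the left side is unchanged. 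Taking the infimum over all such $X_\bullet', Y_\bullet'$ yields the claim. One mild subtlety is that a zigzag $W_\bullet \to X_\bullet'$, $W_\bullet \to Y_\bullet'$ of objectwise weak equivalences still yields an isomorphism of persistence modules after $\rmH_k$, which is fine because each component is an isomorphism.

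The main obstacle is the right-hand inequality, specifically making rigorous the claim that the Vietoris–Rips complex of the glued space $Z$ is controlled by those of $X$ and $Y$ up to a $2\delta$ shift at the homotopy (not just homology) level. The clean way to handle this is to stay simplicial: show that the composites $\VR_t(X) \to \VR_{t+2\delta}(X)$ and $\VR_t(Z) \to \VR_{t+2\delta}(Z)$ factor up to contiguity through the maps induced by a section $Y \to X$ of the correspondence, so that the realizations are genuinely $2\delta$-interleaved up to homotopy; contiguous simplicial maps induce homotopic maps on realizations, which is exactly what is needed to land in the homotopy-interleaving framework rather than merely the homological one. Since this is well-trodden ground, I would present it compactly and defer the details to \cite{blumberg2017universality}, emphasizing only the bookkeeping that the shift is $2\delta$ with $\delta$ arbitrarily close to $\dgh(X,Y)$.
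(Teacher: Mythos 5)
The paper itself offers no proof of Theorem~\ref{thm:stability-HI}: it is stated as a citation to Blumberg--Lesnick \cite{blumberg2017universality} and used as a black box. So the comparison here is between your reconstruction and the original argument of that reference, not against anything in the present paper.

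Your reconstruction is essentially the Blumberg--Lesnick argument and is substantively correct. The left inequality is handled exactly right: homology is invariant under objectwise weak equivalence (so $\rmH_k(X_\bullet')\cong\rmH_k(\VR_\bullet(X))$ whenever $X_\bullet'\simeq\VR_\bullet(X)$), post-composition with a functor is $1$-Lipschitz for interleaving distance (Theorem~\ref{thm:Bubenik}), and taking the infimum over weakly-equivalent replacements gives precisely $\dhi$ on the right. For the right inequality your correspondence construction is the standard one, and the contiguity computation you indicate (both composites $\VR_t(X)\to\VR_{t+2\delta}(Y)\to\VR_{t+4\delta}(X)$ contiguous to the inclusion) is the correct bookkeeping; a small stylistic wrinkle is that you invoke both the glued space $Z$ and the direct $R$-partner maps when either alone suffices, but this does not affect correctness.

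The one genuine technical subtlety, which you flag but defer, is worth naming precisely: the simplicial maps coming from a correspondence are only well-defined up to contiguity, so on realizations the interleaving diagrams commute only up to homotopy, whereas the definition of $\dhi$ in \cite{blumberg2017universality} requires an honest (strictly commuting) $\delta$-interleaving between some weakly equivalent replacements $X_\bullet'\simeq\VR_\bullet(X)$ and $Y_\bullet'\simeq\VR_\bullet(Y)$. Bridging this gap is exactly the content of Blumberg--Lesnick's rectification machinery (roughly, that a homotopy-coherent interleaving can be strictified after cofibrant replacement in a suitable model structure on $\topo^{(\bbR,\leq)}$), and it is the nontrivial part of their proof. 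Deferring it to the reference is reasonable for a sketch, but if you were writing this out fully you would need to either invoke their rectification theorem explicitly or construct the replacements $X_\bullet', Y_\bullet'$ by hand (e.g.\ via a mapping-telescope or homotopy-colimit construction) so that the interleaving maps commute on the nose.
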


\subsection{Topological Invariants}
\label{subsec:topologicla invariants}

We review several topological invariants from the literature that will be used in this work, including the LS-category, sectional category, and their cohomological lower bounds. Each of these invariants is defined for both topological spaces and continuous maps.

\subsubsection{Lyusternik--Schnirelmann Category}

The notion of LS-category of a space was introduced by Lyusternik and Schnirelmann as a topological invariant to establish lower bounds on the number of critical points of smooth functions on a manifold \cite{lusternik1934methodes}. We begin by reviewing the definition and fundamental properties of LS-category following \cite{cornea2003lusternik}.

Beyond spaces, the LS-category can also be defined for continuous maps. This extension was first introduced by Fox \cite{fox1941lusternik} and further developed by Berstein and Ganea \cite{berstein1962category}. We recall the definitions of LS-category for both spaces and maps below.

\begin{definition}[
{\cite[Definition 1.1]{cornea2003lusternik}}] \label{def:cat(X)}
    The \textbf{Lyusternik-Schnirelmann category (LS-category)} of $X$, denoted by $\cat(X)$, is the  least number $n$ (or $+\infty$) of open sets $U_1,\dots, U_{n+1}$ in $ X$ that cover $ X$ such that each inclusion $U_i\hookrightarrow  X$ is null-homotopic (i.e. $U_i$ is contractible to a point in $ X$).
\end{definition}

Let \( X \) and \( Y \) be topological spaces with basepoints \( x_0 \in X \) and \( y_0 \in Y \). The \emph{wedge sum} $X\vee_{x_0\sim y_0} Y$ (or simply $X\vee Y$) is the quotient of the disjoint union \( X \amalg Y \) obtained by identifying the basepoints: 
 ${\displaystyle X\vee Y=(X\amalg Y)\;/{x_0 \sim y_0}}$. 

\begin{proposition}\label{prop:properties of cat}
Let $X$ and $Y$ be two based topological spaces. Then:
    \begin{enumerate}[label=(\alph*)]
        \item $\cat(X \times Y) \leq \cat(X) + \cat(Y)$.
        \item \label{prop-item:cat wedge} $\cat(X\vee Y) = \max\{\cat(X), \cat(Y)\}$. 
        \item $\cat\left((\bbS^d)^{\times n}\right) = n$ for any $d\geq 0$ and $n\geq 1$.
    \end{enumerate}
\end{proposition}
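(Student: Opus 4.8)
The plan is to prove the three parts in order, relying on standard facts about LS-category recalled from \cite{cornea2003lusternik}, and then bootstrapping part (c) from parts (a) and (b).

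For part (a), I would take optimal covers. Suppose $\cat(X)=m$ and $\cat(Y)=n$, witnessed by open covers $U_1,\dots,U_{m+1}$ of $X$ with each $U_i\hookrightarrow X$ null-homotopic, and $V_1,\dots,V_{n+1}$ of $Y$ with each $V_j\hookrightarrow Y$ null-homotopic. The natural move is to consider the "diagonal-type" open sets $W_k := \bigcup_{i+j = k}\, U_i\times V_j$ in $X\times Y$ for $k = 2,\dots,m+n+2$; there are $m+n+1$ of them, and they cover $X\times Y$ since every point lies in some $U_i\times V_j$. The key point is that each $W_k$ is null-homotopic in $X\times Y$: on each piece $U_i\times V_j$ the inclusion into $X\times Y$ factors through the null-homotopic inclusions coordinatewise, and one checks that these partial null-homotopies can be chosen compatibly (or invokes the fact, e.g.\ \cite[Lemma 1.37]{cornea2003lusternik}, that a union of categorical open sets indexed by an antichain in the product is again categorical). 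This gives $\cat(X\times Y)\le m+n = \cat(X)+\cat(Y)$.

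For part (b), the inequality $\cat(X\vee Y)\ge \max\{\cat(X),\cat(Y)\}$ follows from monotonicity of $\cat$ under retractions: both $X$ and $Y$ are retracts of $X\vee Y$ (collapse the other wedge summand to the basepoint), and $\cat$ does not increase under the existence of a retraction, since pulling back a categorical cover of $X\vee Y$ along the inclusion $X\hookrightarrow X\vee Y$ yields a categorical cover of $X$ of the same cardinality. For the reverse inequality, set $m=\max\{\cat(X),\cat(Y)\}$; take categorical covers $\{U_i\}_{i=1}^{m+1}$ of $X$ and $\{V_i\}_{i=1}^{m+1}$ of $Y$ (padding the shorter one with copies of a small contractible neighborhood of the basepoint). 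After shrinking, arrange that the basepoint lies in $U_1$ and in $V_1$ only; then $W_1 := U_1 \cup_{x_0\sim y_0} V_1$ is an open set of $X\vee Y$ that deformation retracts to the basepoint (using that $U_1$ contracts in $X$ and $V_1$ contracts in $Y$, both through the basepoint after a homotopy), while for $i\ge 2$ the sets $U_i$ and $V_i$ are disjoint from the wedge point, so $W_i := U_i\sqcup V_i$ is open in $X\vee Y$ and null-homotopic there. This produces a categorical cover of size $m+1$, giving $\cat(X\vee Y)\le m$. Some care with basepoint neighborhoods and the well-pointedness of $X,Y$ is needed to make the deformation retractions honest; this is the main technical obstacle in the proposition.

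For part (c), I would argue by induction on $n$ using parts (a) and (b). The lower bound is the substantive direction: $\cat((\bbS^d)^{\times n})\ge n$. The cleanest route is the cup-length lower bound for LS-category --- $\cat(Z)$ is at least the largest $k$ for which there are positive-degree classes $u_1,\dots,u_k\in \tilde{\rmH}^*(Z)$ with $u_1\smile\cdots\smile u_k\ne 0$. Taking $d\ge 1$ and $u_i$ the pullback of a generator of $\rmH^d(\bbS^d)$ under the $i$-th projection, the product $u_1\smile\cdots\smile u_n$ is a generator of $\rmH^{nd}((\bbS^d)^{\times n})$ by the Künneth theorem, so $\cat\ge n$. (For $d=0$, $\bbS^0$ is two points and $(\bbS^0)^{\times n}$ is a finite discrete set, so $\cat = 0 = n$ only when $n$... — actually for $d=0$ the statement as written would need $\cat = n$, which fails; I would therefore read the intended hypothesis as $d\ge 1$, or note $\cat(\bbS^0)=0$ and interpret accordingly.) The upper bound $\cat((\bbS^d)^{\times n})\le n$ is immediate from part (a) and $\cat(\bbS^d)=1$ (a sphere is covered by two contractible open hemispheres-with-overlap). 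Combining, $\cat((\bbS^d)^{\times n}) = n$.
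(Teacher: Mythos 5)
The paper does not prove Proposition~\ref{prop:properties of cat}; it records these as standard facts, implicitly citing \cite{cornea2003lusternik}. So there is no ``paper proof'' to compare against, and what you have produced is a reconstruction of classical arguments. Your overall plan is sound, and your observation about part~(c) is a genuinely useful catch: for $d=0$ the space $(\bbS^0)^{\times n}$ is a discrete set of $2^n$ points, with $\cat = 2^n - 1$, so the stated formula fails for $n\ge 2$; the cup-length lower bound (with which you argue) requires $d\ge 1$, since $\rmH^{>0}(\bbS^0) = 0$. The upper bound in~(c) via iterated use of~(a) and $\cat(\bbS^d)=1$ is fine.

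The one place your sketch is thinner than it should be is part~(a). Taking $W_k := \bigcup_{i+j=k} U_i\times V_j$ is indeed the classical ``staircase'' decomposition, but the assertion that each $W_k$ is categorical is not automatic: a union of categorical open sets need not be categorical, and the pieces $U_i\times V_j$ with $i+j=k$ can overlap (e.g.\ $U_1\times V_2$ and $U_2\times V_1$ overlap whenever $U_1\cap U_2$ and $V_1\cap V_2$ are both nonempty). The standard fix first refines the covers of $X$ and $Y$ --- using normality and a shrinking/partition-of-unity argument --- so that for each $k$ the pieces making up $W_k$ become pairwise disjoint; then path-connectivity of $X\times Y$ lets one contract each piece separately and concatenate. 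This refinement step is the real content of the product inequality (it is exactly why CLOT state it under a normality hypothesis, Theorem~1.37 there --- which is the theorem itself, not an auxiliary lemma about unions as your citation suggests). Since the paper is working with CW complexes throughout, the hypotheses are satisfied in context, but the refinement step should not be passed over as a compatibility check of partial homotopies.

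Part~(b) is correct modulo the well-pointedness and basepoint-neighborhood issues you explicitly flag; for CW complexes (the paper's setting) these are satisfied, and your retract argument for the lower bound and the ``pad, arrange the basepoint in only one pair $U_1,V_1$, then glue'' argument for the upper bound are the standard route.
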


\begin{definition}[{\cite[Definition 1.1]{berstein1962category}}]
\label{def:cat(f)}
The \textbf{LS-category of a map $f: X\to  Y$}, denoted by $\cat(f)$, is the least number $n$ (or $+\infty$) such that $ X$ can be covered by open sets $U_1,\dots, U_{n+1}$ such that each $f\vert_{U_i}$ is null-homotopic (i.e. $f\vert_{U_i}$ is homotopic to a constant map from $U_i$ to $X$).
\end{definition}

Shortly after the introduction of the LS-category, Froloff and Elsholz \cite{FroloffElsholz1935} proposed an alternative method for estimating the number of critical values of a function on a manifold. 
Their approach relied on the minimal number of cycles having a non-trivial intersection product, which nowadays is known to be equal to the cup-length.  
Below, we review the cohomology ring, the concept of cup-length, and its connection to the LS-category.

From \cite[Section 3.2]{hatcher2002algebraic}, we recall that cohomology with coefficients in a ring is naturally endowed with a graded ring structure via the cup product operation.
The \emph{cup product} operation is defined at the cochain level using the Alexander–Whitney map, and induces a product on cohomology:
\[\smile: \rmH^*(X) \otimes \rmH^*(X) \to \rmH^*(X),\]
sending $\alpha \in \rmH^k(X)$ and $\beta \in \rmH^l(X)$ to $\alpha \smile \beta \in \rmH^{k+l}(X)$; see  page \pageref{para:cup product} for an interpretation via the diagonal map.
In this work, we consider cohomology over field coefficients, in which case the cohomology becomes a graded algebra over the coefficient field. 
For simplicity, we will often omit the coefficients and write the cohomology of a topological space $X$ as $\rmH^*(X)$.

\begin{definition}
    The \textbf{nilpotency (or index)} of an ideal $I$, denoted by $\nil (I)$, of a ring $R$ is the smallest integer $n$ (or $\infty$) such that $I^{n+1} = 0$. 
\end{definition}

\begin{definition}[{\cite[Definition 1.4]{cornea2003lusternik}}]
    The \textbf{cup-length} of \( X \), denoted by \( \cl(X) \), is defined as the nilpotency of the ideal generated by positive-degree elements in \( \rmH^*(X) \).
\end{definition}

\begin{proposition}[{\cite[Proposition 1.5]{cornea2003lusternik}}]
\label{prop:cl<=cat}
    $\cl(X) \leq \cat(X)$.
\end{proposition}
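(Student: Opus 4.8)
The plan is to use relative cohomology and the relative cup product to show that any product of $\cat(X)+1$ positive-degree cohomology classes vanishes, which is exactly the statement $\cl(X)\le\cat(X)$. Write $n=\cat(X)$ (if $\cat(X)=\infty$ there is nothing to prove), and fix an open cover $U_1,\dots,U_{n+1}$ of $X$ such that each inclusion $\iota_i\colon U_i\hookrightarrow X$ is null-homotopic. Recall that the ideal $I\subseteq\rmH^*(X)$ generated by positive-degree elements is $\bigoplus_{k>0}\rmH^k(X)$, so it suffices to show that for any homogeneous classes $\alpha_1,\dots,\alpha_{n+1}$ of positive degree one has $\alpha_1\smile\cdots\smile\alpha_{n+1}=0$.

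First I would observe that for each $i$ the restriction $\iota_i^*(\alpha_i)\in\rmH^{*}(U_i)$ vanishes: since $\iota_i$ is homotopic to a constant map, it factors up to homotopy through a point, and $\rmH^{>0}(\mathrm{pt})=0$, so $\iota_i^*$ kills all positive-degree classes. Consequently, from the long exact sequence of the pair $(X,U_i)$,
\[
\cdots\to\rmH^{*}(X,U_i)\xrightarrow{\ j_i^*\ }\rmH^{*}(X)\xrightarrow{\ \iota_i^*\ }\rmH^{*}(U_i)\to\cdots,
\]
the class $\alpha_i$ lies in the image of $j_i^*$, so we may choose a relative lift $\bar\alpha_i\in\rmH^{*}(X,U_i)$ with $j_i^*(\bar\alpha_i)=\alpha_i$.

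Next I would invoke the relative cup product (see \cite[Section 3.2]{hatcher2002algebraic}): for subspaces $A,B\subseteq X$ there is a natural pairing $\rmH^p(X,A)\otimes\rmH^q(X,B)\to\rmH^{p+q}(X,A\cup B)$ compatible with the absolute cup product under the maps forgetting the subspaces. Applying this iteratively to $\bar\alpha_1,\dots,\bar\alpha_{n+1}$ produces a class
\[
\bar\alpha_1\smile\cdots\smile\bar\alpha_{n+1}\in\rmH^{*}\bigl(X,\,U_1\cup\cdots\cup U_{n+1}\bigr)=\rmH^{*}(X,X)=0,
\]
where we used that $\{U_i\}$ covers $X$. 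By naturality of the cup product, the image of this relative class under $\rmH^{*}(X,X)\to\rmH^{*}(X)$ is exactly $\alpha_1\smile\cdots\smile\alpha_{n+1}$; since the source group is zero, the product vanishes. Hence $I^{n+1}=0$, i.e.\ $\cl(X)\le n=\cat(X)$.

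The only slightly delicate point is the bookkeeping for the iterated relative cup product — keeping track that the successive pairings land in $\rmH^*(X,U_1\cup\cdots\cup U_k)$ and that compatibility with the absolute product is respected at each stage — but this is entirely standard once the relative cup product and its naturality are in hand, so I do not anticipate a genuine obstacle.
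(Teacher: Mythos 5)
Your proof is correct. Note that the paper itself does not prove this proposition; it is quoted from Cornea--Lupton--Oprea--Tanr\'e, \emph{Lusternik-Schnirelmann Category}, Proposition 1.5, and the argument you give is essentially the standard one found there (and in most treatments of this bound): factor each positive-degree class through a relative group $\rmH^*(X,U_i)$ via the long exact sequence, multiply using the relative cup product $\rmH^p(X,A)\otimes\rmH^q(X,B)\to\rmH^{p+q}(X,A\cup B)$ for open $A,B$, and observe that the product lands in $\rmH^*(X,X)=0$. The one hypothesis worth flagging explicitly is that the $U_i$ must be open (as they are, by \cref{def:cat(X)}) for the iterated relative cup product to land in the correct target group; with that noted, the bookkeeping you describe is indeed routine and the proof is complete.
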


\subsubsection{Sectional Category}

In \cite{schwarz1961genus}, Schwarz introduced a variant of LS-category, referred to as the genus of a fibration.  
Following the terminology of \cite{james1978category}, this notion is also known as the \emph{sectional category} of a fibration. 
We provide a review of this concept following \cite[Section 9.3]{cornea2003lusternik}.

\begin{definition}
A map $p:E\to B$ has the \textbf{homotopy lifting property} with respect to a space $X$ if given a homotopy $g_\bullet:X\times [0,1]\to B$ and a map $\tilde{g}_0:X\to E$ lifting $g_0$, 
there exists a homotopy $\tilde{g}_\bullet:X\times [0,1]\to E$ lifting $g_t$, i.e. the following diagram commutes:
\begin{center}
\begin{tikzcd}
& 
E
\ar[d, "p" right]
\\
X\times [0,1]
\ar[r, "g_\bullet" below, "\simeq" above]
\ar[ur,"\tilde{g}_\bullet" above, dotted]
& 
B,
\end{tikzcd}
\end{center}

A \textbf{fibration} is a map $p:E\to B$ satisfying the homotpy lifting property for all spaces $X$. 
\end{definition}

\begin{definition}[Sectional category]
    Given a fibration $p:E\to B$, the \textbf{sectional category of $p$}, denoted by $\secat(p)$, is the least number $n$ (or $+\inf$) such that $B$ can be covered by $n+1$ open subsets $U_1,\dots, U_{n+1}$ that each admits a section $s_i:U_i\to E$ satisfying \( p \circ s_i = \id_{U_i} \), i.e.
    \begin{center}
    \begin{tikzcd}
    & 
    E
    \ar[d, "p" right]
    \\
    U_i
    \ar[r, hook]
    \ar[ur,"s_i" above, dotted]
    & 
    B,
    \end{tikzcd}
    \end{center}
\end{definition}

\begin{remark}\label{rmk:cat as secat}
    Let $\pathspace_0(X)$ be the set of all paths in a based space \(X\) starting from the base point, and let $p_{X,0}:\pathspace_0(X)\to X$ be the map sending each based path $\gamma$ to $\gamma(1)$. 
    Then 
    \[\cat(X) = \secat(p_{X,0}:\pathspace_0(X)\to X).\] 
    This is because an open subset $U\subset X$ admits a section $s:U \to \pathspace_0(X)$ if and only if the inclusion $U\hookrightarrow X$ is null-homotopic.
\end{remark}

\begin{proposition}[{\cite[Proposition 9.4]{cornea2003lusternik}}] 
\label{prop:coho lower bound secat}
Let $p:E\to B$ be a fibration. Then:
\begin{itemize}
    \item $\secat(p) \leq \cat(B)$. If $E$ is contractible, then $\secat(p) = \cat(B)$.
    \item $\nil (\kernel (p^*)) \leq \secat (p)$, where $p^*$ is the induced morphism on cohomology. 
\end{itemize}
\end{proposition}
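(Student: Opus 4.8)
The plan is to treat the two bullets separately, each following Schwarz's original arguments.

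\textbf{First bullet: $\secat(p)\le\cat(B)$, with equality when $E$ is contractible.} For the inequality, I would start from an open cover $U_1,\dots,U_{n+1}$ of $B$ witnessing $\cat(B)=n$, so each inclusion $j_i\colon U_i\hookrightarrow B$ is null-homotopic via a homotopy $G\colon U_i\times[0,1]\to B$ with $G_0=j_i$ and $G_1$ constant at some point $b_0$. Choosing $e_0\in p^{-1}(b_0)$, the constant map $U_i\to E$ at $e_0$ lifts $G_1$; applying the homotopy lifting property of $p$ to the homotopy run backwards produces $\tilde G\colon U_i\times[0,1]\to E$ with $p\circ\tilde G=G$ and $\tilde G_1$ constant at $e_0$. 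Then $s_i:=\tilde G_0$ satisfies $p\circ s_i=G_0=\id_{U_i}$, so it is a section over $U_i$, and the same cover witnesses $\secat(p)\le n$. For the reverse inequality under the hypothesis that $E$ is contractible, I would take a sectional cover $U_1,\dots,U_{n+1}$ with sections $s_i$, fix a contraction $K\colon E\times[0,1]\to E$ of $E$ to a point $e_0$, and note that $p\circ K\circ(s_i\times\id_{[0,1]})\colon U_i\times[0,1]\to B$ is a homotopy from $p\circ s_i=j_i$ to the constant map at $p(e_0)$; hence each $U_i$ is null-homotopic in $B$ and $\cat(B)\le n$.

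\textbf{Second bullet: $\nil(\kernel(p^*))\le\secat(p)$.} Write $n=\secat(p)$ and fix an open cover $U_1,\dots,U_{n+1}$ of $B$ with sections $s_i\colon U_i\to p^{-1}(U_i)$ of the restricted fibration $p_i\colon p^{-1}(U_i)\to U_i$. The first step is to show that every $\xi\in\kernel(p^*)$ restricts to zero in $\rmH^*(U_i)$: writing $j_i\colon U_i\hookrightarrow B$ and $\tilde\jmath_i\colon p^{-1}(U_i)\hookrightarrow E$ for the inclusions, the square $p\circ\tilde\jmath_i=j_i\circ p_i$ gives $p_i^*(j_i^*\xi)=\tilde\jmath_i^*(p^*\xi)=0$, and applying $s_i^*$, which is a left inverse of $p_i^*$ since $p_i\circ s_i=\id$, yields $j_i^*\xi=0$. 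Hence, by the long exact sequence of the pair $(B,U_i)$, each $\xi_i\in\kernel(p^*)$ lifts to a relative class $\bar\xi_i\in\rmH^*(B,U_i)$ (no uniqueness needed). The second step is to invoke the iterated relative cup product
\[
\rmH^*(B,U_1)\otimes\cdots\otimes\rmH^*(B,U_{n+1})\longrightarrow \rmH^*\!\left(B,\textstyle\bigcup_i U_i\right)=\rmH^*(B,B)=0,
\]
so that $\bar\xi_1\smile\cdots\smile\bar\xi_{n+1}=0$; since this product maps to $\xi_1\smile\cdots\smile\xi_{n+1}$ under the canonical map to $\rmH^*(B)$, the latter vanishes. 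As the $\xi_i$ were arbitrary in $\kernel(p^*)$, we get $(\kernel(p^*))^{n+1}=0$, i.e.\ $\nil(\kernel(p^*))\le n=\secat(p)$.

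\textbf{Main obstacle.} The routine but essential point is the compatibility between the ordinary cup product on $\rmH^*(B)$ and the relative cup product, together with the statement that the product of relative classes lands in $\rmH^*(B,\bigcup_i U_i)$; making this precise requires the standard machinery of relative cohomology (singular cochains vanishing on the subspaces, as in Hatcher \S3.2) and the fact that the $U_i$ are open so that the relevant excision/additivity holds. Everything else reduces to a direct application of the homotopy lifting property and naturality of induced maps on cohomology.
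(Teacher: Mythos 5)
The paper does not prove this proposition; it is cited directly from \cite[Proposition 9.4]{cornea2003lusternik}, so there is no in-paper proof to compare against. Your argument is correct and is essentially the standard one due to Schwarz as presented in that reference: the first bullet by lifting null-homotopies via the homotopy lifting property (and, for the converse under contractibility of $E$, pushing the contraction down through $p\circ s_i$), and the second bullet by showing each zero class vanishes on $U_i$, lifting to $\rmH^*(B,U_i)$ via the exact sequence of the pair, and killing the $(n{+}1)$-fold product in $\rmH^*(B,\bigcup_i U_i)=0$. The one caveat you flag — compatibility of the relative cup product with the absolute one and the need for the $U_i$ to be open (or excisive) — is exactly the technical point one must invoke, and you handle it appropriately.
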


In~\cite{murillo2021topological}, the authors introduce the notion of \(f\)-sectional category as a generalization of the classical sectional category. 
Based on this, they define the topological complexity of a map. 
Although their formulation differs from that in~\cite{scott2022ls}, the resulting invariant is shown to be numerically equivalent in~\cite{scott2022ls}. See \cref{subset: tc zcup of maps} for further discussion.

\begin{definition}[{\cite[Definition 1.1]{murillo2021topological}}]
    Given maps $E\xrightarrow{p} B\xrightarrow{f} Z$, the \textbf{$f$-sectional category of $p$}, denoted by $\secat_f(p)$, is the least number $n$ (or $+\inf$) such that $B$ can be covered by $n+1$ open subsets $U_1,\dots, U_{n+1}$ that each admits a map $s_i:U_i\to E$ (called an \textbf{$f$-section}) such that $f\circ p \circ s \simeq f|_{U_i}$.
\end{definition}


\begin{proposition}[{\cite[Section 1]{murillo2021topological}}]
Let $p:E\to B$ and $f:B\to Z$ be two continuous maps.
    \begin{itemize}
        \item $\secat_{\id_Y}(p) = \secat(p)$.
        \item If $p\simeq \tilde p$ and $f\simeq \tilde f$, then $\secat_f(p) = \secat_{\tilde f}(\tilde p)$.
        \item $\secat_f(p)\leq \min\{\secat(p), \cat(f)\}$. 
        \item $\nil (\kernel (p^*|_{\im f^*})) \leq \secat_f(p)$.
    \end{itemize}
\end{proposition}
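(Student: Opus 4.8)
The plan is to establish the four assertions in turn, since they have quite different flavors: the first is a consequence of the homotopy lifting property, the second is purely formal, the third is proved by exhibiting explicit $f$-sections, and the fourth is a Schwarz-type cohomological estimate built on relative cup products.

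For the first assertion, $\secat_{\id}(p)=\secat(p)$, the inequality $\secat_{\id}(p)\le\secat(p)$ is immediate: a genuine local section $s_i\colon U_i\to E$ with $p\circ s_i=\id_{U_i}$ is in particular an $\id$-section, since $\id\circ p\circ s_i=\id_{U_i}\simeq\id_{U_i}$. For the reverse, suppose $U_1,\dots,U_{n+1}$ cover $B$ with $\id$-sections $s_i\colon U_i\to E$, i.e.\ $p\circ s_i\simeq\iota_i$ where $\iota_i\colon U_i\hookrightarrow B$ is the inclusion. Since $p$ is a fibration, I would apply the homotopy lifting property to the homotopy from $p\circ s_i$ to $\iota_i$ with initial lift $s_i$; its terminal value is a map $s_i'\colon U_i\to E$ homotopic to $s_i$ with $p\circ s_i'=\iota_i$, a genuine section over the same open set. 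Hence $\secat(p)\le n$, and the two quantities coincide.

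For the second assertion, the defining condition ``$f\circ p\circ s_i\simeq f|_{U_i}$'' sees $p$ and $f$ only through their homotopy classes: if $p\simeq p'$, $f\simeq f'$, and $s_i$ is an $f$-section for $p$, then $f'\circ p'\circ s_i\simeq f\circ p\circ s_i\simeq f|_{U_i}\simeq f'|_{U_i}$, so the same cover and the same maps $s_i$ witness $\secat_{f'}(p')\le\secat_f(p)$; the reverse inequality is symmetric. For the third assertion, $\secat_f(p)\le\secat(p)$ again holds because a genuine section is an $f$-section, and $\secat_f(p)\le\cat(f)$ is obtained as follows: take a cover $U_1,\dots,U_{n+1}$ of $B$ with each $f|_{U_i}$ null-homotopic, fix any $e_0\in E$, and let $s_i\colon U_i\to E$ be the constant map at $e_0$. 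Then $f\circ p\circ s_i$ is constant, hence null-homotopic, hence homotopic to $f|_{U_i}$ (two null-homotopic maps into $Z$ taking values in the path-connected set $f(B)$ are homotopic, joining the relevant constants by a path in $f(B)$, using path-connectedness of $B$). Thus each $s_i$ is an $f$-section and $\secat_f(p)\le n=\cat(f)$.

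The heart of the proposition is the fourth assertion. Writing $\iota_i\colon U_i\hookrightarrow B$ for the inclusions, an $f$-section $s_i$ satisfies $f\circ\iota_i=f|_{U_i}\simeq f\circ p\circ s_i$, so on cohomology $\iota_i^*\circ f^*=s_i^*\circ p^*\circ f^*$. Consequently, for any $\xi=f^*(\eta)\in\im(f^*)$ with $p^*(\xi)=0$ one gets $\iota_i^*(\xi)=s_i^*p^*f^*(\eta)=s_i^*(p^*\xi)=0$, i.e.\ every class of the ideal $I:=\kernel(p^*)\cap\im(f^*)$ restricts to zero on each $U_i$. Now assuming $\secat_f(p)=n$ with witnessing cover $U_1,\dots,U_{n+1}$ and $f$-sections $s_i$, and given $\xi_1,\dots,\xi_{n+1}\in I$, the vanishing $\iota_k^*(\xi_k)=0$ lets me lift $\xi_k$ through the long exact sequence of the pair $(B,U_k)$ to a class $\bar\xi_k\in\rmH^*(B,U_k)$ restricting to $\xi_k$. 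The relative cup product then gives $\bar\xi_1\smile\cdots\smile\bar\xi_{n+1}\in\rmH^*(B,U_1\cup\cdots\cup U_{n+1})=\rmH^*(B,B)=0$, which maps to $\xi_1\smile\cdots\smile\xi_{n+1}$ in $\rmH^*(B)$; hence $\xi_1\smile\cdots\smile\xi_{n+1}=0$, so $I^{n+1}=0$ and $\nil(I)\le n=\secat_f(p)$. The only genuinely delicate point in the whole argument is this last step — exactly Schwarz's relative-cup-product argument for sectional category — requiring naturality of the long exact sequences of the pairs $(B,U_i)$ together with multiplicativity of relative cup products over the union of the covering sets; everything else is formal.
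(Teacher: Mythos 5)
The paper cites this proposition from Murillo--Wu without reproducing a proof, so there is no in-text argument to compare your work against; your reconstruction is correct and follows the route one would naturally take. Two small points are worth flagging. In the first bullet, the reverse inequality $\secat(p)\le\secat_{\id}(p)$ genuinely requires $p$ to be a fibration in order to invoke the homotopy lifting property and upgrade a map $s_i$ with $p\circ s_i\simeq\iota_i$ to an actual section; this is consistent with the paper's own definition of $\secat$ (which is stated only for fibrations), but it is not a stated hypothesis of the proposition, so your explicit ``Since $p$ is a fibration'' is an assumption you are adding rather than one you are entitled to. (The statement also has a typo: $\id_Y$ should read $\id_B$.) In the third bullet, the step ``$f\circ p\circ s_i$ constant and $f|_{U_i}$ null-homotopic $\Rightarrow$ homotopic'' silently uses that $B$ is path-connected (so the two limiting constants lie in the same path component of $Z$) and that $E$ is nonempty; both are standing conventions in this paper, but they are doing real work here and should be named. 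The fourth bullet correctly extracts $\iota_i^*\xi=0$ for every $\xi\in\ker p^*\cap\operatorname{im}f^*$ from the identity $\iota_i^*\circ f^*=s_i^*\circ p^*\circ f^*$, and then runs the Schwarz relative-cup-product argument; this is the only delicate step and you handled it correctly, including the passage through $\rmH^*(B,U_1\cup\cdots\cup U_{n+1})=\rmH^*(B,B)=0$.
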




\subsection{Persistent Invariants}
\label{subset:persistent invariants}

Persistent invariants, broadly interpreted, capture the evolution of topological invariants across scale. The classical \emph{rank invariant} of persistent (co)homology vector spaces serves as a prominent example \cite{carlsson2007theory}. There has been ongoing work on developing general frameworks for studying more flexible types of persistent invariants, as seen in works such as \cite{puuska2017erosion, bergomi2019rank, giunti2021amplitudes, memoli2024PersistentCup}.
We review the formalism from \cite{memoli2024PersistentCup} for characterizing persistent invariants that exhibit desirable properties, such as stability.

For any category $\caC$, let $\mor(\caC)$ and $\ob(\caC)$ denote the collection of all morphisms and objects of $\caC$, respectively.

\begin{definition}[{\cite[Definition 2.5]{memoli2024PersistentCup}}]
\label{def:categorical invariant}
A \textbf{categorical invariant} of a category $\caC$ is a map $\invariant :\ob(\caC)\sqcup\mor(\caC)\to\bbN\cup\{\infty\}$ such that 
\begin{itemize}
    \item [(i)] $\invariant (\id _X)=\invariant (X)$,~for all $X\in \ob(\caC)$, and
    \item [(ii)] for any maps $X\xrightarrow{f} Y\xrightarrow{g}Z\xrightarrow{h} W$ in $\caC$, 
	we have \(\invariant(h\circ g\circ f)\leq \invariant(g).\)
\end{itemize}
Alternatively, condition (ii) can be replaced with the equivalent condition
\begin{itemize}
    \item [(ii')] for any maps $X\xrightarrow{f} Y\xrightarrow{g}Z$ in $\caC$, 
	we have $\invariant(g\circ f)\leq \min\{\invariant(f), \invariant(g)\}$.
\end{itemize}
\end{definition}



Given any functor $X_\bullet : (\bbR,\leq)\to\caC$, there is an induced functor $\invariant(X_\bullet)$ from the poset category $\Int$ of intervals to the poset category of natural numbers, given by  
\[
\invariant(X_\bullet)([a,b]) := \invariant(X_a \to X_b).
\] 
We refer to the induced functor as the \textbf{persistence $\invariant$-invariant associated to $X_\bullet$}, or more briefly and generically  as a \textbf{persistent invariant}.

We now recall a notion of distance for comparing persistent invariants.

\begin{definition}[{\cite[Definition 5.3]{patel2018generalized}}]
\label{def:de}
    Let \( \invariant_1, \invariant_2: (\Int, \subseteq) \to (\mathbb{R}_{\geq 0}, \leq) \) be two functors, and let \( \epsilon \geq 0 \). We say that \( \invariant_1 \) and \( \invariant_2 \) are \textbf{\(\epsilon\)-eroded} if, 
    for all \( [a,b] \in \Int \),
    \[
    \invariant_1([a,b]) \geq \invariant_2([a - \epsilon, b + \epsilon]) \quad \text{and} \quad
    \invariant_2([a,b]) \geq \invariant_1([a - \epsilon, b + \epsilon]).
    \]
    
    The \textbf{erosion distance} between \( \invariant_1 \) and \( \invariant_2 \) is defined as
    \[
    d_{\mathrm{E}}(\invariant_1, \invariant_2) := \inf \left\{ \epsilon \geq 0 \,\middle|\, \invariant_1,\invariant_2 \text{ are }\epsilon\text{-erosed}
    \right\},
    \]
    with the convention that \( d_{\mathrm{E}}(\invariant_1, \invariant_2) = \infty \) if no such \( \epsilon \) exists.
\end{definition}

\begin{theorem}[{\cite[Theorem 1]{memoli2024PersistentCup}}]
    \label{thm:stab-per-inv}
Let $\invariant :\ob(\caC)\sqcup\mor(\caC)\to\bbN$ be a categorical invariant.
The persistence $\invariant $-invariant is $1$-Lipschitz stable: for any $X_\bullet ,Y_\bullet :(\bbR,\leq)\to\caC$,
$$d_{\mathrm{E}}(\invariant (X_\bullet ),\invariant (Y_\bullet ))\leq d_{\mathrm \invariant}(X_\bullet ,Y_\bullet ).$$
\end{theorem}

\begin{theorem}[{\cite[Theorem 2]{memoli2024PersistentCup}}]
    \label{thm:main-stability} 
    Let $\invariant $ be a categorical invariant of topological spaces such that $\invariant$ is invariant under post- or pre-composition with weak homotopy equivalences.
    Then, for two persistent spaces $X_\bullet$ and $Y_\bullet$, we have
    \begin{equation}\label{eq:dE-dHI}
        d_{\mathrm{E}}(\invariant (X_\bullet),\invariant (Y_\bullet))\leq d_{\mathrm{HI}}(X_\bullet,Y_\bullet).
    \end{equation}
    For the Vietoris-Rips filtrations $\VR_\bullet(X)$ and $\VR_\bullet(Y)$ of compact metric spaces $X$ and $Y$, we have
    \begin{equation}\label{eq:dE-dGH}
    d_{\mathrm{E}}\left(\invariant \left(\VR_\bullet (X)\right),\invariant \left(\VR_\bullet (Y)\right)\right)\leq 2\cdot d_{\mathrm{GH}}(X,Y). 
    \end{equation}
\end{theorem}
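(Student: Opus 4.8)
The statement to prove is \cref{thm:main-stability}: that for a categorical invariant $\invariant$ of topological spaces which is invariant under pre- or post-composition with weak homotopy equivalences, one has $d_{\mathrm{E}}(\invariant(X_\bullet),\invariant(Y_\bullet)) \leq d_{\mathrm{HI}}(X_\bullet,Y_\bullet)$, and consequently the Gromov--Hausdorff bound \eqref{eq:dE-dGH} for Vietoris--Rips filtrations.

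The plan is to reduce \eqref{eq:dE-dHI} to the ordinary interleaving stability \cref{thm:stab-per-inv} by exploiting the homotopy-invariance hypothesis on $\invariant$. First I would observe that \cref{thm:stab-per-inv} already gives $d_{\mathrm{E}}(\invariant(X_\bullet),\invariant(Y_\bullet)) \leq d_{\mathrm{I}}^{\topo}(X_\bullet,Y_\bullet)$, so it suffices to show the persistent invariant $\invariant(X_\bullet)$ depends only on the weak-equivalence class of $X_\bullet$; then, given any $X_\bullet' \simeq X_\bullet$ and $Y_\bullet' \simeq Y_\bullet$ realizing (up to $\varepsilon$) the homotopy interleaving distance, we get $d_{\mathrm{E}}(\invariant(X_\bullet),\invariant(Y_\bullet)) = d_{\mathrm{E}}(\invariant(X_\bullet'),\invariant(Y_\bullet')) \leq d_{\mathrm{I}}^{\topo}(X_\bullet',Y_\bullet')$, and taking the infimum over all such choices yields \eqref{eq:dE-dHI}. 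So the crux is: if $W_\bullet \to X_\bullet$ is an objectwise weak equivalence, then $\invariant(X_\bullet) = \invariant(W_\bullet)$ as functors $(\Int,\subseteq) \to (\mathbb{N}\cup\{\infty\},\geq)$.

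To prove that, fix an interval $[a,b]$ and consider the commuting square with vertical maps the weak equivalences $w_a\colon W_a \to X_a$, $w_b\colon W_b \to X_b$ and horizontal maps the structure maps $W_a \to W_b$ and $X_a \to X_b$. I want $\invariant(X_a \to X_b) = \invariant(W_a \to W_b)$. Using condition (ii$'$) of \cref{def:categorical invariant} together with the hypothesis that composing with a weak homotopy equivalence (on either side) does not change the value of $\invariant$: the composite $W_a \to W_b \to X_b$ equals $W_a \to X_a \to X_b$, and applying invariance under post-composition with $w_b$ gives $\invariant(W_a \to W_b) = \invariant(W_a \to W_b \to X_b) = \invariant(W_a \to X_a \to X_b)$, and then invariance under pre-composition with $w_a$ gives $\invariant(W_a \to X_a \to X_b) = \invariant(X_a \to X_b)$. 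This shows the two persistent invariants agree on objects of $\Int$ (and trivially they are both order-reversing functors, so they agree as functors). Running this argument for both legs $W_\bullet \to X_\bullet$ and $W_\bullet \to Y_\bullet$ in the definition of weak equivalence shows $\invariant(X_\bullet) = \invariant(W_\bullet) = \invariant(Y_\bullet)$ whenever $X_\bullet \simeq Y_\bullet$; more precisely it shows $\invariant(X_\bullet)$ is a weak-equivalence invariant, which is exactly what the reduction above needs. The main obstacle is really just making sure the hypothesis "$\invariant$ invariant under pre- or post-composition with weak homotopy equivalences" is used in the right form — one needs it for composites of three maps, which is precisely why \cref{def:categorical invariant}(ii) is phrased for $X \to Y \to Z \to W$; there is no deep difficulty here, only bookkeeping.

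Finally, for \eqref{eq:dE-dGH}, I would simply chain \eqref{eq:dE-dHI} with the Blumberg--Lesnick bound $d_{\mathrm{HI}}(\VR_\bullet(X),\VR_\bullet(Y)) \leq 2\,d_{\mathrm{GH}}(X,Y)$ from \cref{thm:stability-HI} (valid for totally bounded, in particular compact, metric spaces): $d_{\mathrm{E}}(\invariant(\VR_\bullet(X)),\invariant(\VR_\bullet(Y))) \leq d_{\mathrm{HI}}(\VR_\bullet(X),\VR_\bullet(Y)) \leq 2\,d_{\mathrm{GH}}(X,Y)$. This part is immediate once \eqref{eq:dE-dHI} is in hand.
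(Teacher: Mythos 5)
The paper does not prove \cref{thm:main-stability}; it is cited verbatim as \cite[Theorem 2]{memoli2024PersistentCup}, so there is no internal proof to compare your reconstruction against. That said, your strategy is the natural one and matches the expected shape of the argument: reduce to the plain interleaving bound of \cref{thm:stab-per-inv} by showing that an objectwise weak equivalence $W_\bullet \to X_\bullet$ forces $\invariant(W_\bullet) = \invariant(X_\bullet)$ as functors, conclude \eqref{eq:dE-dHI} by infimizing over weakly-equivalent replacements, and chain with \cref{thm:stability-HI} for \eqref{eq:dE-dGH}.

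One point deserves scrutiny. Your chain of equalities for $\invariant(W_a \to W_b) = \invariant(X_a \to X_b)$ uses post-composition invariance at one step (with $w_b$) and pre-composition invariance at another (with $w_a$), so as written it needs \emph{both} of those properties. The hypothesis says ``invariant under post- \emph{or} pre-composition,'' and under the disjunctive reading you only get a one-sided inequality: for example, with pre-composition invariance alone and axiom (ii$'$) of \cref{def:categorical invariant}, one has
\[
\invariant(X_a \to X_b) = \invariant\big((X_a\to X_b)\circ w_a\big) = \invariant\big(w_b \circ (W_a\to W_b)\big) \leq \invariant(W_a\to W_b),
\]
with no evident reverse bound, since $w_a$ need not admit an inverse (or even a homotopy inverse, absent a CW hypothesis). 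Your closing remark about ``composites of three maps'' and \cref{def:categorical invariant}(ii) gestures at this issue but does not actually supply the missing direction. In the present paper's application (\cref{prop:stability of TC and zcup}), both pre- and post-composition invariance are verified for $\TC$ and $\zcup$, so this is harmless for those cases; but if the cited theorem is really asserted under the one-sided hypothesis, the argument needs an extra ingredient (e.g.\ passing to a CW replacement of all three persistent spaces so that the weak equivalences become homotopy equivalences, and then using the zig-zag through $W_\bullet$ more carefully). Worth double-checking against the precise statement in \cite{memoli2024PersistentCup} before treating this as a complete proof.
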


\section{Topological Complexity (TC)}
\label{sec:TC}

The motion planning problem seeks a function assigning a continuous path between any pair of points in a path-connected space $X$. A \emph{continuous} solution exists if and only if $X$ is contractible \cite[Theorem 1]{farber2003}. 
To quantify the discontinuity of such motion planners, Farber introduced the topological complexity of topological spaces. 

In this section, we review the topological complexity of spaces \cite{farber2003} 
and maps \cite{murillo2021topological,scott2022ls,scott2022topological}, along with cohomological lower bounds for topological complexity.  

\subsection{Topological Complexity of Topological Spaces}

Recall that \( \pathspace(X) \) is the space of all continuous paths \(\gamma: [0,1] \to X\), and \( p_X: \pathspace(X) \to X \times X \) is the map assigning to each path its pair of endpoints \( (\gamma(0), \gamma(1)) \). 


\begin{definition}[Topological complexity, 
{\cite{farber2003}}
]\label{def:TC} 
Let $X$ be a path-connected topological space.
\begin{itemize}
    \item A \textbf{motion planner} on a subset $U\subset X\times X$ is a continuous map $s:U\to \pathspace(X)$ such that $p_X\circ s = \id_U$.
    \item The \textbf{topological complexity} 
    $\TC(X)$ of $X$ is the least number $n$ (or $+\infty$) such that $X\times X$ can be covered by open subsets $U_1,\dots, U_{n+1}$, such that there exists a motion planner on each $U_i$\footnote{The motion planners need not be compatible with each other.}. 
    In other words, $s_i:U_i\to \pathspace(X)$ is a continuous map satisfying $p_X\circ s_i=\id_{U_i}$, i.e.,
    \[
    \begin{tikzcd}[column sep=small]
        & 
        \pathspace(X) \ar[d, "p_X" right] \ar[r, phantom, "\ni"] & 
        \gamma \ar[d, mapsto]
        \\
        U_i \ar[r, hook] \ar[ur, "s_i" above, dotted] & 
        X \times X \ar[r, phantom, "\ni"] & 
        (\gamma(0), \gamma(1))
    \end{tikzcd}
    \] 
\end{itemize}
\end{definition}

Here are some properties of $\TC(X)$.
\begin{proposition}[\cite{farber2003,farber2004instabilities,farber2006topology}]
\label{prop:tc of a space}
Let $X$ and $Y$ be path-connected spaces. Then:
\begin{enumerate}[label=(\alph*)]
    \item If $X$ and $Y$ are homotopic, then $\TC(X)=\TC(Y)$.
    \item If $X$ is paracompact\footnote{A topological space is paracompact if every open cover of it has a locally finite open refinement.}, then $\cat(X)\leq \TC(X)\leq \cat(X\times X)$.
    \item $\TC(X\times Y) \leq \TC(X) + \TC(Y).$ 
    \item \label{prop-item:wedge}
    $\max\{\TC(X), \TC(Y)\} \leq \TC(X\vee Y)\leq \max\{\TC(X), \TC(Y), \cat(X) + \cat(Y)\}.$ 
    \item Let $X=(\bbS^d)^{\times n}$ be the product of $n$ copies of $d$-spheres. Then $\TC(X)$ equals $n$ if $d$ is odd, and $2n$ if $d$ is even.
    \item \label{prop-item:bouquets} Let $X=\bigvee^n\bbS^d$ be the bouquet of $n$ copies of $d$-spheres. Then $\TC(X)$ equals $1$ if $n=1$ and $d$ is odd, and $2$ if either $n>1$ or $d$ is even.
    \item $\TC(X) = \secat(p_X: \pathspace(X) \to X \times X)$.
\end{enumerate}
\end{proposition}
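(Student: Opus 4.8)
The plan is to organize everything around item~(g) and the facts already recorded for the sectional category, invoking Farber~\cite{farber2003topological,farber2004instabilities} only for the two genuinely combinatorial statements, items~\ref{prop-item:wedge} and~\ref{prop-item:bouquets}.

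First I would prove item~(g): the endpoint map $p\colon \rmP(X)\to X\times X$ is a fibration (it is a pullback of the free path fibration; alternatively one verifies the homotopy lifting property directly), so comparing \cref{def:TC} with the definition of $\secat$ gives $\TC(X)=\secat(p)$. Every remaining item then becomes a statement about $\secat(p)$. For item~(a) I would show that a homotopy equivalence $f\colon X\to Y$ induces a fiber-homotopy equivalence over $f\times f$ between the two path fibrations, and invoke fiber-homotopy invariance of $\secat$ over a homotopy equivalence of bases (or, more hands-on, transport local sections through $f$ and a homotopy inverse). Item~(b): the upper bound $\TC(X)\le\cat(X\times X)$ is immediate from item~(g) and $\secat(p)\le\cat(X\times X)$ (\cref{prop:coho lower bound secat}); for $\cat(X)\le\TC(X)$ I would fix $x_0\in X$ and pull an open cover of $X\times X$ carrying motion planners back along $x\mapsto(x,x_0)$, noting that a motion planner on $U$ restricts to a null-homotopy in $X$ of each preimage, with paracompactness (hence normality) used to pass freely between the section and null-homotopy formulations of the cover condition while keeping the cardinality under control.

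Next, item~(c) is subadditivity of the genus: given motion planners on covers $\{U_i\}$ of $X\times X$ and $\{V_j\}$ of $Y\times Y$, the product planners live on the sets $U_i\times V_j$ (after permuting the four coordinates of $(X\times Y)^{\times 2}$), and the standard diagonal-grouping argument cuts the number of needed open sets from $(\TC(X)+1)(\TC(Y)+1)$ down to $\TC(X)+\TC(Y)+1$. For item~(e) I would obtain the upper bound from item~(c) together with $\TC(\bbS^d)=1$ for $d$ odd and $=2$ for $d$ even, and the matching lower bound from the zero-divisor-cup-length: using the Künneth form of $\rmH^*\big((\bbS^d)^{\times n}\big)$, exhibit one zero-divisor per sphere factor (odd case) or two per factor (even case) whose cup product is nonzero. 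For the lower bound in item~\ref{prop-item:wedge} I would apply item~(a) in its retract form $\TC(A)\le\TC(X\vee Y)$ (valid whenever $A$ is a retract) to the collapse maps $X\vee Y\to X$ and $X\vee Y\to Y$.

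The one place where I would simply cite Farber rather than reprove anything is the upper bound in item~\ref{prop-item:wedge} and the exact values in item~\ref{prop-item:bouquets}: these require explicit motion-planner constructions on $(X\vee Y)^{\times 2}$~\cite{farber2004instabilities}. This is the main obstacle — a path between a point of $X\setminus\{\ast\}$ and a point of $Y\setminus\{\ast\}$ must be routed through the wedge point, and bounding the number of open sets needed to do this continuously is precisely the nontrivial combinatorial input. Since these values enter only as black-box inputs to the later computations (notably \cref{prop:erosion-comp}), reproducing the constructions here is unnecessary.
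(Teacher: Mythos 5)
The paper does not prove this proposition at all: it is stated with a citation to Farber's works \cite{farber2003topological,farber2004instabilities,farber2006topology} and used as a black box. Your sketch is therefore strictly more detailed than what appears in the paper. The overall plan --- prove (g) first so that $\TC$ becomes a $\secat$, deduce (a) and the right-hand inequality of (b) from $\secat$-invariance and $\secat(p)\le\cat(B)$, prove (c) by the Schwarz ladder/grouping argument, and get (e) by combining (c) with a zero-divisor-cup-length lower bound via K\"unneth --- is the standard proof scheme and is sound; deferring to Farber for the upper bound in (d) and the exact values in (f) is also reasonable, since those really do require the explicit motion-planner constructions on $(X\vee Y)^{\times 2}$.

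One point is misattributed. For the lower bound $\max\{\TC(X),\TC(Y)\}\le\TC(X\vee Y)$ you invoke ``item~(a) in its retract form,'' but item~(a) is homotopy invariance, and $X$ is a retract of $X\vee Y$ without being homotopy equivalent to it (unless $Y$ is contractible). The fact you actually need is the separate and strictly stronger retract monotonicity: if $i\colon A\to Z$ admits a retraction $r\colon Z\to A$ with $r\circ i=\id_A$, then $\TC(A)\le\TC(Z)$. This is a classical consequence of $\secat$ behavior under dominations (or, in the language this paper later develops, of \cref{prop:TC of a map} items~\ref{prop:indentity} and~\ref{prop:map composition}: $\TC(A)=\TC(r\circ i)\le\TC(i)\le\TC(Z)$), not of homotopy invariance. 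Your plan goes through once you cite the retract property correctly; as written, the justification you gave would not suffice.
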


\begin{remark}\label{rmk:wedge}
    The lower bound in \cref{prop:tc of a space} Item \ref{prop-item:wedge} was improved to $\max\{\TC(X), \TC(Y),\cat(X\times Y)\}$ in \cite[Theorem 3.6]{dranishnikov2014topological} for ANR spaces. A space \(X\) is called an \emph{absolute neighborhood retract} (ANR) if it admits a neighborhood retract in every space it embeds as a closed subset. This includes all CW complexes. 
\end{remark}


We include detailed discussions of the following examples, which will serve as key test cases in  \cref{subsec:stability} for comparing the distinguishing power of persistent topological complexity with that of persistent homology.

\begin{example}[Topological complexity of real projective spaces]
\label{ex:tc of rpn}
    For \( n \geq 1 \), let \( \rp^n \) denote the real projective \( n \)-space. The topological complexity of \( \rp^n \) has been studied extensively; see \cite{FTY2003}. 
    In particular, it follows from \cite[Theorem 4.5]{FTY2003} and its proof that, for any positive integer \( r \), if \( n \geq 2^{r-1} \), then
    \[
    \TC(\rp^n) \geq \zcup(\rp^n) \geq 2^r - 1.
    \]
    Thus, for $n = 2^{r-1},\, 2^{r-1}+1,\, \dots,\, 2^r - 2$,
    \[
    \TC(\rp^n) \geq \zcup(\rp^n) > n = \cat(\rp^n).
    \]
    Moreover, it is known that \( \TC(\rp^n) = n \) for \( n = 1, 3, 7 \) and \( \TC(\rp^{2}) = 3 \). 
    Explicit values of \( \TC(\rp^n) \) for all \( n \leq 23 \) are provided in \cite{FTY2003}.
\end{example}

\begin{example}[Topological complexity of bouquets of spheres]
\label{ex:tc of bouquets}
    \cref{prop:tc of a space} Item \ref{prop-item:bouquets} can be extended to wedges of spheres of varying dimensions. Let \( X := \bbS^{d_1} \vee \dots \vee \bbS^{d_n} \) be the bouquet of spheres of dimensions \( d_1, \dots, d_n \). We claim that
    \begin{equation}\label{eq:tc of bounquets}
        \TC(\bbS^{d_1}\vee \dots \vee \bbS^{d_n}) = 
            \begin{cases}
                1, \quad &\mbox{if $n=1$ and $d_1$ is odd;}\\
                2, \quad &\mbox{otherwise}.
            \end{cases}
    \end{equation}
    While this result is likely known to experts, we include a self-contained proof to demonstrate how one can compute topological complexity using other invariants such as LS-category and cup-length, particularly because we will rely on this example later.
    
    The case \(n = 1\) follows directly from \cref{prop:tc of a space} Item \ref{prop-item:wedge}. 
    Now assume \(n \geq 2\), and proceed by induction. 
    For the base case \(n = 2\), \cref{prop:tc of a space} Item \ref{prop-item:wedge} yields
    \[
    \TC(\bbS^{d_1} \vee \bbS^{d_2}) 
    \leq \max\left\{ \TC(\bbS^{d_1}),\, \TC(\bbS^{d_2}),\, \cat(\bbS^{d_1}) + \cat(\bbS^{d_2}) \right\}
    = \max\{1, 1, 2\} = 2.
    \]
    Meanwhile, \cref{rmk:wedge} implies
    \[
    \TC(\bbS^{d_1} \vee \bbS^{d_2}) 
    \geq \max\left\{ \TC(\bbS^{d_1}),\, \TC(\bbS^{d_2}),\, \cat(\bbS^{d_1} \times \bbS^{d_2}) \right\}
    = \max\{1, 1, 2\} = 2,
    \]
    and hence \(\TC(\bbS^{d_1} \vee \bbS^{d_2}) = 2\).

    Now assume as the induction hypothesis that the claim holds for any bouquet of \(n\) spheres. 
    Let \(X' := X \vee \bbS^{d_{n+1}}\).
    By \cref{prop:properties of cat} Item \ref{prop-item:cat wedge}, we have
    \(
    \cat(X) = \max_{1\leq i\leq n}\{\cat(\bbS^{d_i})\} = 1,
    \)
    and by \cref{prop:cl<=cat}, 
    \(
    \cat(X \times \bbS^{d_{n+1}}) \geq \cl(X \times \bbS^{d_{n+1}}) = 2.
    \)
    Applying \cref{prop:tc of a space} Item \ref{prop-item:wedge} and \cref{rmk:wedge}, we obtain
    \[
    2 = \max\{1, 1, \cat(X \times \bbS^{d_{n+1}})\} 
    \leq \TC(X') 
    \leq \max\{1, 1, \cat(X) + \cat(\bbS^{d_{n+1}})\} = 2,
    \]
    and therefore \(\TC(X') = 2\), completing the induction.
\end{example}

    

\subsection{Zero-divisor-cup-length of Topological Spaces}

As illustrated in \cref{ex:tc of bouquets}, computation of topological complexity often relies on auxiliary invariants such as the LS-category and cup-length. Motivated by this, Farber defines the \emph{zero-divisor-cup-length}, a cohomological invariant derived from the lower bound for sectional category (see \cref{prop:coho lower bound secat}), to provide more accessible lower bounds for topological complexity \cite{farber2003}.

The cup product operation is closely related to the map induced at cohomology level by the diagonal map $\Delta_X : X\to X\times X$, defined as $\Delta_X(x) = (x,x)$.
Specifically, let $\pi_1,\pi_2:X\times X\to X$ be the projections onto the first and second component.
Given a map $f:X\to Y$, let $f^*:\rmH^*(Y) \to \rmH^*(X)$ denotes the induced map in cohomology. 
The cohomology classes $\alpha\in \rmH^k(X)$ and $\beta\in \rmH^l(X)$ can be pulled back to $X\times X$ as $\pi_1^*(\alpha)$ and $ \pi_2^*(\beta)$, respectively. 
The \emph{cross product} (or \emph{external cup product}) is defined as 
\[
\rmH^*(X) \otimes \rmH^*(X) \xrightarrow{\times} \rmH^*(X\times X) \text{ with }
\alpha \times \beta := \pi_1^*(\alpha) \smile \pi_2^*(\beta).
\] 

The \emph{(internal) `cup product'} operation $\smile:\rmH^*(X) \otimes \rmH^*(X) \xrightarrow{\times} \rmH^*(X\times X) \xrightarrow{\Delta_X^*} \rmH^*(X)$ can also be defined as the composition of $\Delta_X^*$ and the cross product: \label{para:cup product}
\[
\alpha \smile \beta 
:= \Delta_X^*(\alpha\times \beta) 
= \Delta_X^*(\pi_1^*(\alpha) \smile \pi_2^*(\beta)).
\]
We will use the notation $\smile_X$ when we want to specify its dependence on the space $X$. 

Since we work with field coefficients, the Künneth formula (see \cite[Theorem 3.15]{hatcher2002algebraic} or \cite[Section 5.8]{spanier1989algebraic}) ensures that the cross product induces a ring isomorphism.  
Farber defines the zero-divisor-cup-length in two slightly different but equivalent ways, leveraging the isomorphism \( \rmH^*(X) \otimes \rmH^*(X) \cong \rmH^*(X \times X) \): using \( \ker(\smile_X) \) in \cite{farber2003} and \( \ker(\Delta_X^*) \) in \cite{farber2008invitation}.  
We will adopt the latter convention.

\begin{definition}[
{\cite[Definition 6]{farber2003}}]
    The kernel of $\Delta_X^*: \rmH^*(X \times X) \to \rmH^*(X)$ is called the \textbf{ideal of the zero-divisors} of $\rmH^*(X)$ and its elements are called \textbf{zero-divisors}.
    
    The \textbf{zero-divisor-cup-length}, denoted by $\zcup(X)$, is the nilpotency of $\kernel (\Delta_X^*) $, i.e., the length of the longest non-zero product of positive-degree elements in $\kernel(\Delta_X^*)$.
\end{definition}


\begin{proposition}[{\cite[Theorem 7]{farber2003}}]
\label{prop:zcup<=tc}
    $\zcup(X) = \nil \big(\kernel (p_X^*: \rmH^*(X\times X) \to \rmH^*(\pathspace(X)))\big)$. 
    Thus, $\zcup(X) \leq \TC(X)$. 
\end{proposition}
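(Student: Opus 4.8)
\textbf{Proof plan for Proposition~\ref{prop:zcup<=tc}.}

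The plan is to identify $\zcup(X)$, defined as $\nil(\ker(\Delta_X^*))$, with the nilpotency of $\ker(p^*)$ for the path fibration $p\colon \rmP(X)\to X\times X$, and then invoke the cohomological lower bound for sectional category together with Farber's identification $\TC(X) = \secat(p)$ (Proposition~\ref{prop:tc of a space}, last item). The equality $\zcup(X) = \nil(\ker(p^*))$ is the substantive part; once it is established, the inequality $\zcup(X) \leq \TC(X)$ is immediate: by Proposition~\ref{prop:coho lower bound secat} we have $\nil(\ker(p^*)) \leq \secat(p)$, and by Proposition~\ref{prop:tc of a space} $\secat(p) = \TC(X)$.

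To prove $\ker(p^*) = \ker(\Delta_X^*)$ as ideals in $\rmH^*(X\times X)$, the key observation is that $p\colon \rmP(X)\to X\times X$ is (weakly) homotopy equivalent, over $X\times X$, to the diagonal inclusion $\Delta_X\colon X\to X\times X$. Concretely, the evaluation-at-$0$ map $\rmP(X)\to X$, $\gamma\mapsto\gamma(0)$, is a homotopy equivalence (with homotopy inverse sending a point to the constant path), and under this equivalence $p$ corresponds to $\Delta_X$: a path $\gamma$ with $\gamma(0)=x$ deformation-retracts to the constant path at $x$, whose endpoint pair is $(x,x)$. Hence there is a homotopy-commutative triangle relating $p$ and $\Delta_X$, and therefore the induced maps $p^*$ and $\Delta_X^*$ on cohomology have the same kernel (one factors through the other via an isomorphism). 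Since nilpotency of an ideal depends only on the ideal, $\nil(\ker(p^*)) = \nil(\ker(\Delta_X^*)) = \zcup(X)$.

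The main obstacle — really the only point requiring care — is making the homotopy equivalence "over $X\times X$" precise enough that it induces an honest equality of kernels in the cohomology ring, rather than just an isomorphism of the two quotient rings $\rmH^*(X\times X)/\ker$. The cleanest way is to exhibit a map $j\colon X\to\rmP(X)$ (constant paths) with $p\circ j = \Delta_X$ strictly, and a homotopy $j\circ(\mathrm{ev}_0) \simeq \id_{\rmP(X)}$ such that $p$ composed with this homotopy is a homotopy through maps to $X\times X$ ending at $p$; this is the standard "contracting paths to their initial point" homotopy $H(\gamma,t)(s) = \gamma((1-t)s)$, which indeed keeps track of endpoints appropriately. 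From $p\circ j = \Delta_X$ we get $j^*\circ p^* = \Delta_X^*$, so $\ker(p^*)\subseteq\ker(\Delta_X^*)$; from the homotopy $p\simeq \Delta_X\circ(\mathrm{ev}_0)$ we get $p^* = (\mathrm{ev}_0)^*\circ\Delta_X^*$ with $(\mathrm{ev}_0)^*$ an isomorphism, so $\ker(\Delta_X^*)\subseteq\ker(p^*)$. Combining, the two ideals coincide, and the proposition follows. (Alternatively, one can cite Farber's original argument in \cite[Theorem 7]{farber2003topological} directly once the fibration-versus-diagonal comparison is recorded.)
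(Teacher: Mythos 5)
Your proof is correct and follows essentially the same route as the paper: both identify the constant-path map $c\colon X\to\rmP(X)$ (your $j$) satisfying $p\circ c=\Delta_X$ with $c$ a homotopy equivalence, deduce $\ker(p^*)=\ker(\Delta_X^*)$, and then apply Proposition~\ref{prop:coho lower bound secat} together with $\secat(p)=\TC(X)$. The paper packages the equality of kernels slightly more compactly — from $\Delta_X^*=c^*\circ p^*$ with $c^*$ an isomorphism (hence injective), both inclusions fall out at once — whereas you prove the two inclusions separately using the homotopy $p\simeq\Delta_X\circ\mathrm{ev}_0$; this is a stylistic difference, not a mathematical one.
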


\begin{proof}
    Let $c:X\to \pathspace(X)$ be the map of assigning the constant path at $x$ for any point $x\in X$. 
    Then $c$ is a homotopy equivalence, implying that $c^*$ defines an isomorphism $\rmH^*(\pathspace(X))\cong \rmH^*(X)$.
    Note that $\Delta_X = p_X\circ c$.
    Thus, 
    \[\zcup(X) \leq \nil (\kernel (\Delta_X^*)) = \nil (\kernel (p_X^*)) \leq \secat(p_X) = \TC(X).\qedhere\]
\end{proof}

\begin{example}[\cite{farber2003}]
\label{ex:zcup of Sn}
    This example illustrates the use of \cref{prop:zcup<=tc} to derive lower bounds on topological complexity.
    
    Let \(\bbS^n\) be the \(n\)-dimensional sphere. Let \(u \in \rmH^n(\bbS^n)\) denote the fundamental class and \(1 \in \rmH^0(\bbS^n)\) the unit. Consider the elements
    \[
    \alpha := 1 \otimes u - u \otimes 1, \qquad \beta := u \otimes u
    \]
    in \(\rmH^*(\bbS^n \times \bbS^n) \cong \rmH^*(\bbS^n) \otimes \rmH^*(\bbS^n)\). Since \(\Delta^*(x \otimes y) = x \smile y\), we have
    \(
    \Delta^*(\alpha) = u - u = 0\) and \(\Delta^*(\beta) = u \smile u = 0.
    \)
    Thus, both \(\alpha\) and \(\beta\) lie in \(\ker (\Delta^*)\). Furthermore,
    \[
    \alpha \smile \alpha = (1 \otimes u - u \otimes 1)^2 = -2(-1)^n \beta =
    \begin{cases}
    0   & \text{if } n \text{ is odd}\\
    -2\beta & \text{if } n \text{ is even}.
    \end{cases}
    \]
    By \cref{prop:zcup<=tc}, we have
    \[
    \TC(\bbS^n) \geq \zcup(\bbS^n) \geq
    \begin{cases}
    1 & \text{if } n \text{ is odd}, \\
    2 & \text{if } n \text{ is even}.
    \end{cases}
    \]
    For a full computation of \(\TC(\bbS^n)\) and verification that equalities in the above formula hold, see \cite{farber2003}.
\end{example}

\subsection{Topological Complexity and Zero-divisor-cup-length of Maps}

\label{subset: tc zcup of maps}

Many efforts have been have been made to generalize the concept of topological complexity from spaces to maps. 
In \cite{pavevsic2017complexity}, Pave{\v{s}}i{\'c}, following a suggestion by Dranishnikov, proposed defining the topological complexity of a map $f:X\to Y$ to be the minimal number $n$ such that $X\times Y$ can be covered by open subsets $U_1,\dots,U_{n+1}$, where for each \( i = 1, \dots, n+1 \), there exists a continuous map \( s_i: U_i \to \pathspace(X) \) satisfying \( p_{X,Y} \circ s_i = \id_{U_i} \). 
Here, \( p_{X,Y}: \pathspace(X) \to X \times Y \) is the projection defined by \( p_{X,Y}(\gamma) = (\gamma(0), f(\gamma(1))) \), where \( \pathspace(X) \) denotes the path space of \( X \).
This definition has the drawback of only applying to maps with the path-lifting property. 
To address this, Pave{\v{s}}i{\'c} later refined the definition in \cite{pavesic2018topologist, pavesic2019topological} to extend its applicability to general maps. 
Despite this improvement, the resulting $\TC(f)$ is still not a homotopy invariant.
This issue was recently resolved in works by Murillo and Wu \cite{murillo2021topological} and Scott \cite{scott2022ls, scott2022topological}. 
We review the concept and properties of topological complexity of maps following the latter.

\begin{definition}[Topological complexity of a map, {\cite[Definition 3.1.1]{scott2022ls}}
]\label{def:TC of f} 
Let $f:X\to Y$ be a continuous map between path-connected spaces.
\begin{itemize}
    \item A \textbf{$f$-motion planner} on a subset $U\subset X\times X$ is a continuous map $s^f:U\to \pathspace(Y)$ such that $p_Y\circ s^f = (f\times f)|_U$. 
    \item The \textbf{(pullback) topological complexity} $\TC(f)$ of $f$ is the least number $n$ (or $+\infty$) such that $X\times X$ can be covered by open subsets $U_1,\dots, U_{n+1}$, such that there exists a $f$-motion planner on each $U_i$. In other words, $s_i^f:U_i\to \pathspace(X)$ is a continuous map satisfying $p_X\circ s_i^f=\id_{U_i}$, i.e.,
    \[
    \begin{tikzcd}
        & &
        \pathspace(Y) \ar[d, "p_Y" right] 
        \\
        U_i \ar[r, hook] \ar[urr, "s_i^f" above, dotted] & 
        X\times X \ar[r, "f\times f"]
        &
        Y \times Y 
    \end{tikzcd}
    \]
\end{itemize}
\end{definition}

\begin{remark}[Relative topological complexity of a pair]
The notion of \textit{relative topological complexity}, introduced in \cite{farber2008invitation} and further developed in \cite{short2018relative}, can be seen as a special case of the topological complexity of a map, where the inclusion \(A \hookrightarrow X \times X\) serves as the map. 
It generalizes classical topological complexity by allowing paths to traverse a larger space \(X\) while their endpoints lie in \(A \subset X \times X\). 
\end{remark}

In~\cite[Theorem 3.1.11]{scott2022ls}, the author introduces the following notion as a generalization of the classical zero-divisor-cup-length, which provides a cohomological lower bound for the topological complexity of a space, to the setting of continuous maps.

\begin{definition}[Zero-divisor-cup-length of a map, {\cite[Theorem 3.1.11]{scott2022ls}}
]\label{def:zcl of f} 
Let $f:X\to Y$ be a continuous map between path-connected spaces, and $(f\times f)^*:\rmH^*(Y\times Y)\to \rmH^*(X\times X)$ be the ring homomorphism induced by $f\times f$. 
The \textbf{zero-divisor-cup-length} of $f$ is defined to be 
\[\zcup(f) := \nil \left(\kernel(\Delta_X^*)\cap \im ((f\times f)^*)\right) 
= \nil  \Big(\im \big(\kernel\Delta_{Y}^*\xrightarrow{(f\times f)^*} \kernel\Delta_{X}^*\big)\Big)
.\]
\end{definition}

\begin{remark}
To illustrate the definition of $\zcup(f)$, we include the following diagram, which shows how zero-divisors pull back along the map \( f \):
\[
\begin{tikzcd}
    \ker (\Delta_Y^*) 
    \ar[r,hook]
    \ar[d, "(f\times f)^*" left]
    & \rmH^*(Y\times Y)
    \ar[r, "\Delta_Y^*"]
    \ar[d, "(f\times f)^*" left]
    & \rmH^*(Y) 
    \ar[d, "f^*" right] \\
    \ker (\Delta_X^*) 
    \ar[r,hook]
    & \rmH^*(X\times X) 
    \ar[r, "\Delta_X^*"]
    & \rmH^*(X).
\end{tikzcd}
\]
The right-hand square commutes because \(\Delta_Y \circ f = (f \times f) \circ \Delta_X\), which implies
\[
f^* \circ \Delta_Y^* = \Delta_X^* \circ (f \times f)^*,
\quad \text{so} \quad (f \times f)^*(\ker (\Delta_Y^*)) \subseteq \ker (\Delta_X^*).
\]
Thus, \((f \times f)^*\) sends zero-divisors in \(Y\) to classes in zero-divisors in \(X\). To define \(\zcup(f)\), we consider this image and compute the nilpotency of the resulting ideal:
\[
\zcup(f) = \nil\big((f \times f)^*(\ker (\Delta_Y^*))\big).
\]
\end{remark}

Below, we recall some properties of $\TC(f)$.
\begin{proposition}[\cite{scott2022ls}] \label{prop:TC of a map}
Let $f, f_1, f_2:X \to Y$ and $g:Y \to Z$ be continuous maps between path-connected spaces. 
Then:
\begin{enumerate}[label=(\alph*)]
    \item \label{prop:indentity} $\TC(\id_X)=\TC(X)$.
    \item \label{prop:map v.s. space} $\TC(f)\leq \min\{\TC(X),\TC(Y)\}.$ 
    \item \label{prop:cup cat TC} $\cl(f)\leq \cat(f)\leq \TC(f) \leq \cat(f\times f)$.
    \item \label{prop:cohomology estimate} Let $X$ be an ANR. Then $\zcup(f) \leq \TC(f)$.
    \item $\TC(f_1)=\TC(f_2)$, if $f_1$ and $f_2$ are homotopic.
    \item \label{prop:map composition} $\TC(g\circ f)\leq \min\{\TC(g),\TC(f)\}$, for any  maps $f:X\to Y$ and $g:Y\to Z$.
    \item \label{prop:map composition right h.i.} $\TC(f\circ g) = \TC(f)$, if $g$ has a right homotopy inverse.
    \item \label{prop:map composition left h.i.} $\TC(g\circ f) = \TC(f)$, if $g$ has a left homotopy inverse. 
\end{enumerate}
\end{proposition}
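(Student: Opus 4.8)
The plan is to treat the eight items as a package, proving each as a formal consequence of the corresponding fact for $f$-sectional category (or for ordinary sectional category) together with the identification $\TC(f) = \secat_{f\times f}(\Delta_X^*\text{-fibration})$, or more precisely the characterization of $\TC(f)$ in terms of the path-fibration setup of \cref{def:TC of f}. The key structural input, which I would state at the outset, is that $\TC(f)$ equals the $\Delta_Y$-sectional category of the path fibration $p_X\colon\rmP(X)\to X\times X$, i.e. $\TC(f) = \secat_{f\times f}(p_X)$ where one uses the homotopy $(f\times f)\circ p_X \simeq (f\times f)\circ p_X\circ s_i$ reformulated via the constant-path section. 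Once this is in hand, items \ref{prop:indentity}, \ref{prop:map v.s. space}, \ref{prop:map composition}, \ref{prop:map composition right h.i.}, \ref{prop:map composition left h.i.}, and the homotopy invariance all follow from the general properties of $\secat_f$ listed just before \cref{subset:persistent invariants} (naturality, $\secat_{\id}=\secat$, invariance under homotopy of $f$ and $p$, and the bound $\secat_f(p)\le\min\{\secat(p),\cat(f)\}$), combined with \cref{prop:tc of a space}\ref{prop-item:wedge}-type identifications; most of these are one- or two-line deductions.

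I would carry out the items in the following order. First, \ref{prop:indentity}: when $f=\id_X$, an $f$-motion planner is literally a motion planner, so $\TC(\id_X)=\TC(X)$ by comparing \cref{def:TC of f} with \cref{def:TC}. Second, \ref{prop:map v.s. space}: the bound $\TC(f)\le\TC(X)$ is immediate since any motion planner for $X$ is an $f$-motion planner; the bound $\TC(f)\le\TC(Y)$ requires pulling back a motion planner on $Y\times Y$ along $f\times f$ and composing with a path-lifting, which is exactly the $\secat_f(p)\le\cat(f)\le\ldots$ circle of ideas --- here I would invoke the $\secat_f$ inequality directly. Third, \ref{prop:cup cat TC}: the chain $\cl(f)\le\cat(f)\le\TC(f)$ follows from \cref{prop:cl<=cat} applied to maps and from the fact that the diagonal $\Delta_X\colon X\to X\times X$ composed with $f$ exhibits $\cat(f)$ as a lower bound for the $f$-sectional category of $p_X$; the upper bound $\TC(f)\le\cat(f\times f)$ follows from $\secat_f(p)\le\cat(f\times f)$ since $p_X$ has contractible total space. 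Fourth, \ref{prop:cohomology estimate}: this is the cohomological lower bound $\nil(\ker(p^*|_{\im(f\times f)^*}))\le\secat_{f\times f}(p_X)$ from the last bullet of the $\secat_f$ proposition, combined with the constant-path homotopy equivalence $c\colon X\to\rmP(X)$ (as in the proof of \cref{prop:zcup<=tc}) which identifies $p_X^*$ with $\Delta_X^*$ up to the isomorphism $c^*$; the ANR hypothesis is needed precisely to guarantee the open cover can be taken with the required local sections, i.e. to invoke \cite{scott2022ls}. Fifth, the homotopy invariance of $\TC(f)$ under $f_1\simeq f_2$ follows from invariance of $\secat_f$ under homotopy of $f$. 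Finally \ref{prop:map composition}, \ref{prop:map composition right h.i.}, \ref{prop:map composition left h.i.}: $\TC(g\circ f)\le\TC(f)$ is immediate since an $f$-motion planner is a $(g\circ f)$-motion planner (the path still lives in $\rmP(X)$); $\TC(g\circ f)\le\TC(g)$ needs the pullback argument again; and when $g$ has a one-sided homotopy inverse $h$, the equalities come from applying the inequality twice, e.g. $\TC(f) = \TC(h\circ g\circ f)\le\TC(g\circ f)\le\TC(f)$ when $h\circ g\simeq\id$.

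The main obstacle I anticipate is item \ref{prop:cohomology estimate} together with the precise bookkeeping in \ref{prop:map v.s. space} and \ref{prop:map composition}: the inequalities involving $\TC(Y)$ (rather than $\TC(X)$) are the ones where one cannot simply restrict an existing planner but must pull back along $f\times f$ and then lift, and making that rigorous requires the path-lifting / homotopy-lifting machinery behind the $\secat_f$ formalism of \cite{murillo2021topological,scott2022ls} rather than a naive set-theoretic argument. For the cohomological bound, the subtlety is that $\zcup(f)$ as defined in \cref{def:zcl of f} is $\nil(\im((f\times f)^*)\cap\ker\Delta_X^*)$, and one must check this coincides with $\nil(\ker(p_X^*)\cap\im((f\times f)^*))$ under the identification $c^*\colon\rmH^*(\rmP(X))\xrightarrow{\sim}\rmH^*(X)$ with $\Delta_X = p_X\circ c$; this is the map-level analogue of \cref{prop:zcup<=tc} and I would prove it by the same diagram chase, then feed it into the sectional-category cohomology lower bound. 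Since all the heavy lifting is already packaged in the cited properties of $\secat_f$, I expect the proof to be short, with the only real content being the two diagram chases just described; I would present it in roughly that order, grouping the trivial items together and spending most of the space on \ref{prop:cup cat TC} and \ref{prop:cohomology estimate}.
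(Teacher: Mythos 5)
The paper does not prove this proposition; it is stated as a citation to \cite{scott2022ls}, so there is no in-paper argument to compare against. Your plan---derive everything from the Murillo--Wu properties of $\secat_f$ together with the identification $\TC(f)=\secat_{f\times f}(p_X)$---is a reasonable reconstruction and cleanly disposes of \ref{prop:indentity}, the $\TC(X)$ half of \ref{prop:map v.s. space}, the last inequality in \ref{prop:cup cat TC}, homotopy invariance, the $\TC(f)$ half of \ref{prop:map composition}, and (given those) items \ref{prop:map composition right h.i.} and \ref{prop:map composition left h.i.}. But two of the remaining bounds have a genuine gap, not just a deferred detail.

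The inequality $\TC(f)\le\TC(Y)$ in \ref{prop:map v.s. space}, and likewise $\TC(g\circ f)\le\TC(g)$ in \ref{prop:map composition}, are exactly the places you hand over to ``the $\secat_f(p)\le\cat(f)$ circle of ideas'' and ``the pullback argument again,'' and neither Murillo--Wu property you cite delivers them. From $\secat_{f\times f}(p_X)\le\min\{\secat(p_X),\cat(f\times f)\}$ you obtain only $\TC(X)$ and $\cat(f\times f)$ as upper bounds; $\TC(Y)=\secat(p_Y)$ does not appear, and it is not dominated by $\cat(f\times f)$ in general. Worse, under the Murillo--Wu description the required local data are maps $U_i\to\rmP(X)$, whereas pulling a motion planner on $Y\times Y$ back along $f\times f$ gives maps $U_i\to\rmP(Y)$---these are not $(f\times f)$-sections of $p_X$, so the informal pullback does not close. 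The clean argument runs through Scott's actual definition $\TC(f)=\secat(q_f)$, where $q_f\colon (X\times X)\times_{Y\times Y}\rmP(Y)\to X\times X$ is the strict pullback of $p_Y$ along $f\times f$: then $\secat(q_f)\le\secat(p_Y)=\TC(Y)$ by pulling back sections, $\secat(q_f)\le\secat(p_X)=\TC(X)$ via the fibrewise map $\rmP(X)\to(X\times X)\times_{Y\times Y}\rmP(Y)$ sending $\gamma\mapsto\big((\gamma(0),\gamma(1)),\,f\circ\gamma\big)$, and $\TC(g\circ f)\le\TC(g)$ follows because $(gf\times gf)^*p_Z=(f\times f)^*\big((g\times g)^*p_Z\big)$. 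You need to invoke the bridge between $\secat_{f\times f}(p_X)$ and $\secat(q_f)$ explicitly (this is the ``numerical equivalence'' the paper mentions just before \cref{def:TC of f}), and it is also the real reason the ANR hypothesis appears in \ref{prop:cohomology estimate}---the cohomological lower bound is native to the $\secat_{f\times f}(p_X)$ side, while the upper bounds above are cleanest on the $\secat(q_f)$ side---rather than the reason you give. Two smaller corrections: the clause ``since $p_X$ has contractible total space'' in your treatment of \ref{prop:cup cat TC} is extraneous (the inequality $\secat_{f\times f}(p_X)\le\cat(f\times f)$ is unconditional), and $\cat(f)\le\TC(f)$ still needs the restrict-to-$X\times\{x_0\}$ argument written out rather than the one-line gesture at $\Delta_X$.
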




\section{Persistent Topological Complexity and Its Stability}
\label{sec:persistent_TC}
In this section, we lift both topological complexity and its cohomological lower bound, the zero-divisor-cup-length, to persistent invariants and study their properties, including stability.

\subsection{Persistent Topological Complexity and Cohomological Lower Bound}
\label{subsec:def_persistent_TC}

It follows from \cref{prop:TC of a map} Items \ref{prop:indentity} and \ref{prop:map composition}, that the topological complexity of spaces and maps together satisfy the following conditions:
\begin{itemize}
    \item [(i)] $\TC (\id _X)= \TC (X)$, for any $X$ in $\topo$, and
    \item [(ii)] \(\TC(h\circ g\circ f)\leq \TC(g),\) for any maps $X\xrightarrow{f} Y\xrightarrow{g}Z\xrightarrow{h} W$ in $\topo$. 
\end{itemize}
Thus, $\TC :\ob(\topo)\sqcup\mor(\topo)\to\bbN\cup\{\infty\}$ defines a categorical invariant (see \cref{def:categorical invariant}) on topological spaces. 
When applied to persistent spaces, this invariant gives rise to persistent invariants, according to \cref{subset:persistent invariants}.

More precisely, we introduce the following definition.

\begin{definition}[Persistent topological complexity]
\label{def:p_TC}
    Let $X_\bullet$ be a persistent space over $(\bbR, \leq)$.
    The \textbf{persistent topological complexity} of $X_\bullet$ is the functor 
    \begin{align*}
        \TC (X_\bullet):(\Int, \subseteq)&\to (\bbN\cup \{+\infty\},\, \geq)\\
        [a,b]&\mapsto \TC \Big(f_a^b: X_a\to X_b\Big).
    \end{align*}
\end{definition}

Analogously, we verify below that the zero-divisor-cup-length defines a categorical invariant on the category of topological spaces, so we can also lift it to a persistent invariant.

\begin{lemma} \label{lem:zcup is categorical}
The zero-divisor-cup-length $\zcup :\ob(\topo)\sqcup\mor(\topo)\to\bbN\cup\{\infty\}$ defines a categorical invariant on topological spaces. 
\end{lemma}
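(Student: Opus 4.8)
The plan is to verify the two defining conditions of a categorical invariant (\cref{def:categorical invariant}) for $\zcup$ on $\topo$: namely that $\zcup(\id_X) = \zcup(X)$, and that $\zcup$ is non-increasing under composition with arbitrary maps on either side, i.e. $\zcup(h \circ g \circ f) \leq \zcup(g)$. For both, the natural route is to unwind \cref{def:zcl of f}, which describes $\zcup(f)$ as the nilpotency of the ideal $(f \times f)^*\big(\ker(\Delta_Y^*)\big) \subseteq \ker(\Delta_X^*) \subseteq \rmH^*(X \times X)$ — equivalently, the length of the longest nonzero product of positive-degree zero-divisors of $Y$ after pulling them back along $f \times f$.

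First I would dispatch the identity condition: for $f = \id_X$, the map $(f \times f)^* = \id_{\rmH^*(X \times X)}$, so $(f\times f)^*\big(\ker(\Delta_X^*)\big) = \ker(\Delta_X^*)$, and therefore $\zcup(\id_X) = \nil(\ker(\Delta_X^*)) = \zcup(X)$ directly from the definition of $\zcup$ of a space. Then, for the composition condition, I would take maps $X \xrightarrow{f} Y \xrightarrow{g} Z \xrightarrow{h} W$ and use functoriality of cohomology and of the product construction: $(h \circ g \circ f) \times (h \circ g \circ f) = (h \times h) \circ (g \times g) \circ (f \times f)$, hence the induced map on cohomology factors as $(f\times f)^* \circ (g \times g)^* \circ (h \times h)^*$. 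Consequently,
\[
(h \circ g \circ f \times h \circ g \circ f)^*\big(\ker \Delta_W^*\big) = (f \times f)^*\Big((g \times g)^*\big((h \times h)^*(\ker \Delta_W^*)\big)\Big) \subseteq (f \times f)^*\Big((g \times g)^*(\ker \Delta_Z^*)\Big),
\]
where the inclusion uses that $(h \times h)^*$ sends zero-divisors of $W$ into zero-divisors of $Z$ (the commuting-square observation following \cref{def:zcl of f}, applied to $h$). So the ideal defining $\zcup(h \circ g \circ f)$ is contained in the image under $(f\times f)^*$ of the ideal $(g \times g)^*(\ker \Delta_Z^*)$ that defines $\zcup(g)$. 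Since a ring homomorphism cannot increase the nilpotency of an ideal — if a product of $n$ elements of an ideal $J$ is nonzero in the image, then the corresponding product of preimages is nonzero in the source — we get $\nil\big((f\times f)^*(J)\big) \leq \nil(J)$ with $J = (g\times g)^*(\ker \Delta_Z^*)$, i.e. $\zcup(h \circ g \circ f) \leq \zcup(g)$. (Strictly, one should phrase this as: the image of an ideal $J$ under a ring homomorphism $\varphi$ satisfies $\varphi(J)^{n+1} = \varphi(J^{n+1})$, so if $J^{n+1} = 0$ then $\varphi(J)^{n+1} = 0$, giving $\nil(\varphi(J)) \leq \nil(J)$.)

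The argument is essentially bookkeeping, and I do not anticipate a serious obstacle; the one point requiring a little care is the bound $\nil\big(\varphi(J)\big) \leq \nil(J)$ for a ring homomorphism $\varphi$ and an ideal $J$ — one must note that $\varphi(J)$ is an ideal in the image subring (or pass to the ideal it generates in the target, which only makes nilpotency smaller) and that $\varphi$ being a ring map means $\varphi$ of a product of elements of $J$ is the product of their images, so $\varphi(J)^{m} \subseteq \varphi(J^m)$. I would state this as a short standalone observation (or cite it as the analogue of the reasoning already used implicitly for LS-category and cup-length in \cite{memoli2024PersistentCup}), then conclude that $\zcup$ satisfies conditions (i) and (ii') of \cref{def:categorical invariant}, hence is a categorical invariant, which is exactly what is needed to lift it to the persistent invariant $\zcup(X_\bullet)$ of \cref{def:p_zcup}.
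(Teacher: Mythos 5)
Your proof is correct and follows essentially the same route as the paper's. Both arguments hinge on the functoriality $(f\times f)^* \circ (g\times g)^*$, the observation that $(f\times f)^*$ maps zero-divisors to zero-divisors, and the fact that a ring homomorphism does not increase the nilpotency of an ideal; the only cosmetic differences are that you verify condition (ii) directly with three maps while the paper verifies the equivalent condition (ii') (proving $\zcup(g\circ f)\leq\min\{\zcup(f),\zcup(g)\}$), and that you work with the image description $\nil\big((f\times f)^*(\ker\Delta_Y^*)\big)$ of $\zcup(f)$ while the paper uses the intersection description $\nil\big(\ker\Delta_X^*\cap\im(f\times f)^*\big)$ — both of which the paper's Definition~\ref{def:zcl of f} asserts are equal.
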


\begin{proof}
For any topological space $X$, we have $\zcup(\id_X) = \nil  \big(\kernel\Delta_{X}^*\big) = \zcup(X)$.
Now consider maps $X \xrightarrow{f} Y \xrightarrow{g} Z$ in $\topo$. Since $((g \circ f) \times (g \circ f))^* = (f \times f)^* \circ (g \times g)^*$, we have
\[
\zcup(g \circ f) = \nil\big(\im((f \times f)^* \circ (g \times g)^*) \cap \ker (\Delta_X^*)\big).
\]
This is bounded above by both
\[
\nil\big(\im(f \times f)^* \cap \ker (\Delta_X^*)\big) = \zcup(f) \quad \text{and} \quad \nil\big(\im(g \times g)^* \cap \ker (\Delta_Y^*)\big) = \zcup(g),
\]
since $(f \times f)^*$ and $(g \times g)^*$ are ring homomorphisms and do not increase nilpotency. Hence,
\[
\zcup(g \circ f) \leq \min\{\zcup(f), \zcup(g)\}.
\]

By \cref{def:categorical invariant}, $\zcup$ defines a categorical invariant on topological spaces.
\end{proof}

\begin{definition}[Persistent zero-divisor-cup-length]
\label{def:p_zcup}
    Let $X_\bullet$ be a persistent space over $(\bbR, \leq)$.
    The \textbf{persistent zero-divisor-cup-length} of $X_\bullet$ is the functor 
    \begin{align*}
        \zcup (X_\bullet):(\Int, \subseteq)&\to (\bbN\cup \{+\infty\},\, \geq)\\
        [a,b]&\mapsto \zcup \Big(\kernel \Delta_{X_a}^*\xrightarrow{(f_a^b\times f_a^b)^*} \kernel\Delta_{X_b}^*\Big) = \nil  \Big(\im \big(\kernel\Delta_{X_a}^*\xrightarrow{(f_a^b\times f_a^b)^*} \kernel\Delta_{X_b}^*\big)\Big).
    \end{align*}
\end{definition}



The following properties of persistent topological complexity, which extend the properties of TC of spaces, are direct consequences of \cref{prop:TC of a map} Items \ref{prop:cup cat TC} and \ref{prop:cohomology estimate}.
\begin{proposition}
Let $X_\bullet$ be a persistent space.
    \begin{itemize}
        \item $\cl(X_\bullet) \leq \cat(X_\bullet) \leq \TC(X_\bullet).$
        \item If $X_t$ is an ANR for each $t$, then $\zcup(X_\bullet) \leq \TC(X_\bullet)$. 
    \end{itemize}
\end{proposition}

\subsection{Stability and Distinguishing Power}

\label{subsec:stability}

We show that both the persistent topological complexity and the persistent zero-divisor-cup-length are stable persistent invariants, by verifying that they satisfy the conditions in \cref{thm:main-stability}.

\stabilitytc*

\begin{proof}
    By Whitehead's theorem (see, e.g., \cite[Theorem 4.5]{hatcher2002algebraic}), every weak homotopy equivalence between CW complexes is a homotopy equivalence. It follows from \cref{prop:TC of a map} Items \ref{prop:map composition right h.i.} and \ref{prop:map composition left h.i.} that $\TC$ is invariant under post- and pre-composition with homotopy equivalences.
    
    Since homotopy equivalences induce isomorphisms of cohomology rings, they preserve the zero-divisor ideals used to define $\zcup$. Therefore, $\zcup$ is also invariant under post- and pre-composition with homotopy equivalences.
    
    Thus, both invariants satisfy the hypotheses of \cref{thm:main-stability}, and \cref{eq:dE-dHI-TC-zcup} follows.
    
    Finally, since Vietoris--Rips filtrations of compact metric spaces are persistent CW complexes, \cref{eq:dE-dGH-TC-zcup} follows from \cref{eq:dE-dHI-TC-zcup} and the stability of the homotopy interleaving distance.
\end{proof}

\paragraph{Distinguishing Power.}

We analyze the persistent topological complexity and the persistent zero-divisor-cup-length of both the real projective space $\rp^n$ and the wedge sum $\wedgeS := \bbS^1 \vee \bbS^2 \vee \dots \vee \bbS^n$ to derive lower bounds on their Gromov–Hausdorff distance. 
As a consequence, we show that these invariants distinguish the two spaces more effectively than persistent homology, which yields a weaker bound (see \cref{eq:homology_bound_rpn_S}).
In this example, we work with mod $2$ coefficients.
We equip these spaces with suitable metrics, assuming that each \(\bbS^d\) denotes the round \(d\)-sphere of radius 1 with its geodesic metric, as follows:
\begin{itemize}
    \item \(\rp^n\) is equipped with the quotient metric induced by identifying antipodal points on a geodesic sphere of radius 2, so that \(\rp^n\) and \(\bbS^n\) have the same diameter.
    \item \(\wedgeS\) is equipped with the gluing metric.
\end{itemize} 
The \emph{gluing metric} on the wedge sum \(X \vee_{x_0 \sim y_0} Y\) of metric spaces is defined (cf.~\cite{burago2001course}) by
\begin{equation*}\label{para:gluing}
d_{X \vee Y}(x, y) := d_X(x, x_0) + d_Y(y, y_0) \quad \text{for all } x \in X,\, y \in Y,
\end{equation*}
with \(d_{X \vee Y}|_{X \times X} = d_X\) and \(d_{X \vee Y}|_{Y \times Y} = d_Y\).

The pair of spaces, \(\rp^n\) and \(\wedgeS\), was studied in \cite{medina2025persistent} to investigate the distinguishing power of persistent Steenrod modules. 
In \cite{medina2025persistent}, the authors showed that persistent homology cannot yield a lower bound greater than $\tfrac{\pi}{8}$ for the Gromov–Hausdorff distance $\dgh(\rp^n, \wedgeS)$; see \cref{eq:homology_bound_rpn_S}. 
Using persistent Steenrod modules, they improved this bound to $\tfrac{\pi}{6}$, which in this paper we also obtain via both persistent topological complexity and  persistent zero-divisor-cup-length.

\rpnvswedge*

To estimate the erosion distance between the persistent topological complexity (or the persistent zero-divisor-cup-length) of \(\rp^n\) and \(\wedgeS\), we first describe the behavior of these invariants on their Vietoris--Rips filtrations; see \cref{fig:tc_zcl}. 
Our approach combines known values of \(\TC\) and \(\zcup\) with homotopical information about Vietoris--Rips complexes.
For \(\rp^n\), we apply results in \cite{adams2022metric} to obtain the homotopical information of its Vietoris--Rips filtration.
For the wedge sum \(\wedgeS\) with the gluing metric, we combine \cite[Theorem~7.1]{lim2024vietoris}, which describes the homotopy type of \(\VR_t(\bbS^n)\) for $t \leq \zeta_n=\arccos\big(-\tfrac{1}{n+1}$\big), with \cite[Proposition~1]{adamaszek2020homotopy}, which studies the behavior of the Vietoris--Rips complexes under metric gluing.
We now formalize these estimates.

\begin{lemma}
\label{lem:TC_zcup_RPn_wedge}
Let $\invariant = \TC$ or $\zcup$. Then, for any $n\geq 2$,
\begin{enumerate}
    \item[(1)] 
    \label{lem_comp:RPn}
    \(
    \invariant(\VR_\bullet(\rp^n))(J) = 
    \begin{cases}
    \invariant(\rp^n)>2, & \text{if } J \subset (0, \tfrac{2\pi}{3}),\\
    0, & \text{if } J \not\subset [0,  \pi].
    \end{cases}
    \)
    
    \item[(2)] 
    \label{lem_comp:wedgeS}
    \(
    \invariant(\VR_\bullet(\wedgeS))(J) = 
    \begin{cases}
    \invariant(\wedgeS)=2, & \text{if } J \subset (0, \zeta_n),\\
    0, & \text{if } J \not\subset [0, \pi],
    \end{cases}
    \)
    where $\zeta_n := \arccos\left(-\tfrac{1}{n+1}\right)$.
\end{enumerate}
\end{lemma}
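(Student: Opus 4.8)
The statement packages together four claims: two about the "small-scale" behavior of the invariant on each filtration, and two about the "large-scale" vanishing. I would treat them separately. For the large-scale vanishing (the $J \not\subset [0,\pi]$ cases), the plan is to recall that both $\rp^n$ and $\wedgeS$ have diameter $\pi$, so that for $t > \pi$ the Vietoris--Rips complex $\VR_t(\cdot)$ is a full simplex on the underlying point set, hence contractible; any structure map $\VR_a(\cdot)\to\VR_b(\cdot)$ with $b > \pi$ therefore factors through a contractible space, and so $\invariant$ of that map is $0$ by \cref{prop:TC of a map}\ref{prop:map v.s. space} (since $\TC$ of a contractible space is $0$, hence $\TC(f)\leq\TC(Y)=0$), and correspondingly $\zcup$ of such a map vanishes because $\kernel(\Delta^*_{X_b})=0$ when $X_b$ has trivial reduced cohomology. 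One must be slightly careful that $J\not\subset[0,\pi]$ only forces the right endpoint $b$ past $\pi$ (or the interval to lie entirely above $\pi$), but either way the target $X_b$ is contractible, which is all that is used.

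For the small-scale part, the strategy is to identify the homotopy type of the structure maps $\VR_a(\cdot)\to\VR_b(\cdot)$ for $a\le b$ inside the relevant window and then invoke homotopy invariance of $\TC$ and $\zcup$ for maps (\cref{prop:TC of a map}). For $\rp^n$: by \cite{adams2022metric} the Vietoris--Rips complex $\VR_t(\rp^n)$ is homotopy equivalent to $\rp^n$ for $t\in(0,\tfrac{2\pi}{3})$, and moreover the inclusions are equivalences in this range, so for $J=[a,b]\subset(0,\tfrac{2\pi}{3})$ the structure map is homotopy equivalent to $\id_{\rp^n}$; hence $\invariant(\VR_\bullet(\rp^n))(J)=\invariant(\id_{\rp^n})=\invariant(\rp^n)$ by \cref{prop:TC of a map}\ref{prop:indentity} (and the $\zcup$ analogue), and $\invariant(\rp^n)>2$ follows from \cref{ex:tc of rpn} once $n\ge 2$ together with $\zcup(\rp^n)\le\TC(\rp^n)$. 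For $\wedgeS$ with the gluing metric: combine \cite[Theorem~7.1]{lim2024vietoris}, giving $\VR_t(\bbS^k)\simeq\bbS^k$ for $t\le\zeta_k$, with \cite[Proposition~1]{adamaszek2020homotopy}, which shows Vietoris--Rips complexes of metric gluings along a point decompose as the corresponding wedge; since $\zeta_n=\arccos(-\tfrac1{n+1})$ is the smallest of the $\zeta_k$ for $k\le n$, for $t<\zeta_n$ one gets $\VR_t(\wedgeS)\simeq\bbS^1\vee\cdots\vee\bbS^n=\wedgeS$, and the inclusions are equivalences, so the structure map for $J\subset(0,\zeta_n)$ is homotopic to $\id_{\wedgeS}$. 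Then $\invariant(\VR_\bullet(\wedgeS))(J)=\invariant(\wedgeS)$, and $\TC(\wedgeS)=2$ is exactly \cref{ex:tc of bouquets} (\cref{eq:tc of bounquets} with $n\ge2$), while $\zcup(\wedgeS)=2$ follows from the cup-length computation used there ($\cl(\wedgeS\times\wedgeS)=2$ gives a length-$2$ product of zero-divisors, and $\zcup\le\TC=2$ bounds it above).

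The step I expect to be the main obstacle is making the wedge-sum identification fully rigorous at the level of \emph{maps}, not just spaces. \cite[Proposition~1]{adamaszek2020homotopy} identifies the homotopy type of $\VR_t$ of a wedge, but to conclude that the structure map $\VR_a(\wedgeS)\to\VR_b(\wedgeS)$ is homotopic to the identity I need the homotopy equivalences $\VR_t(\wedgeS)\simeq\wedgeS$ to be compatible with inclusions, i.e. to assemble into a natural weak equivalence of persistent spaces over the interval $(0,\zeta_n)$ — and similarly for $\rp^n$ via \cite{adams2022metric}. I would handle this by citing the relevant naturality/functoriality statements in those references (the equivalences there are induced by canonical maps, e.g. vertex maps or nerve-type collapses, which commute with the inclusions up to homotopy), and then observing that a structure map between objectwise-equivalent persistent spaces whose total space is (objectwise) $\wedgeS$ with inclusions filling in is homotopic to $\id_{\wedgeS}$; once that is in hand, \cref{prop:TC of a map}\ref{prop:map composition right h.i.}, \ref{prop:map composition left h.i.}, and \ref{prop:indentity} finish both $\invariant=\TC$ and, via cohomology-ring isomorphisms, $\invariant=\zcup$. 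A secondary, purely bookkeeping obstacle is being careful about the open versus closed endpoint conventions for both the intervals $J$ and the Vietoris--Rips parameter; since the paper has fixed the open convention $\diam(\sigma)<t$ and notes all results hold with the closed one too, I would just match conventions and note that the boundary values $\tfrac{2\pi}{3}$, $\zeta_n$, $\pi$ are excluded from the windows precisely to avoid the transition complexes whose homotopy type is not controlled (this is what the gray regions in \cref{fig:tc_zcl} record).
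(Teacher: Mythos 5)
Your overall strategy matches the paper's: identify the homotopy types of $\VR_t(\rp^n)$ (via \cite{adams2022metric}) and $\VR_t(\wedgeS)$ (via \cite[Theorem~7.1]{lim2024vietoris} together with \cite[Proposition~1]{adamaszek2020homotopy}), conclude that the structure maps are homotopy equivalences on the small-scale windows so that $\invariant(\text{structure map}) = \invariant(\text{space})$ by \cref{prop:TC of a map}\ref{prop:indentity}, \ref{prop:map composition right h.i.}, \ref{prop:map composition left h.i.}, and handle the large-scale regime by observing that once $t$ exceeds the diameter $\pi$ the complex is a full simplex. Your explicit concern about promoting ``objectwise equivalence'' to ``structure map is a homotopy equivalence'' is legitimate and more careful than what the paper spells out; the paper simply asserts the structure maps are equivalences, implicitly relying on the cited references providing that information.

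The one concrete gap is your justification of $\zcup(\wedgeS)\geq 2$. You assert that $\cl(\wedgeS\times\wedgeS)=2$ ``gives a length-$2$ product of zero-divisors,'' but this does not follow: a nonzero length-two product in $\rmH^*(\wedgeS\times\wedgeS)$ witnessing $\cl=2$ need not consist of classes in $\ker\Delta^*_{\wedgeS}$. For example $(u_1\otimes 1)\smile(1\otimes u_2)=u_1\otimes u_2\neq 0$ witnesses $\cl\geq 2$, yet $\Delta^*(u_i\otimes 1)=u_i\neq 0$, so neither factor is a zero-divisor. The correct argument over $\bbZ/2$ (the coefficients the paper fixes in this section) takes $a_i:=1\otimes u_i + u_i\otimes 1\in\ker\Delta^*$ for two distinct indices $1\leq i< j\leq n$, where $u_k$ generates $\rmH^k(\bbS^k)\subset\rmH^k(\wedgeS)$; then $a_i\smile a_j = u_i\otimes u_j + u_j\otimes u_i\neq 0$ by K\"unneth, giving $\zcup(\wedgeS)\geq 2$, and $\zcup(\wedgeS)\leq\TC(\wedgeS)=2$ closes the gap. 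Be aware that the paper's own route through ``$\zcup(\bbS^2)=2$'' is also delicate here: over $\bbZ/2$ the class $\alpha=1\otimes u + u\otimes 1\in\rmH^*(\bbS^2\times\bbS^2;\bbZ/2)$ squares to $2(u\otimes u)=0$, so $\zcup(\bbS^2;\bbZ/2)=1$ and the needed length-two product must come from spheres of two \emph{different} dimensions as above; the conclusion $\zcup(\wedgeS)=2$ is correct either way, but the justification should use $a_i,a_j$ with $i\neq j$.
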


\begin{proof}
    Since \(n \geq 2\), it follows from \cref{ex:tc of rpn}, \cref{ex:tc of bouquets}, and \cref{ex:zcup of Sn} that
    \[
    \TC(\rp^n) \geq \zcup(\rp^n) > 2, \quad \text{and} \quad \TC(\wedgeS) = \zcup(\wedgeS) = 2.
    \]
    Here, the equality $\zcup(\wedgeS) = 2$, although not stated explicitly in the referenced examples, follows from the cohomology structure of the wedge sum. 
    Since the cohomology ring of a wedge sum splits as a direct sum and cup products between elements from different summands vanish, the zero-divisor-cup-length of the wedge sum is at least that of any individual summand. 
    In particular, $\zcup(\bbS^2) = 2$ (see \cref{ex:zcup of Sn}), so $\zcup(\wedgeS) \geq 2$.
    Together with the inequality $\zcup(\wedgeS) \leq \TC(\wedgeS) = 2$, we conclude that $\zcup(\wedgeS) = 2$.

    We now verify the two claims:
    \begin{enumerate}
        \item[(1)] For \(\rp^n\) equipped with the quotient metric from the geodesic \(n\)-sphere of radius \(2\), it is known that \(\VR_t(\rp^n)\) is homotopy equivalent to \(\rp^n\) for all \(t \in (0, \tfrac{2\pi}{3})\), and becomes contractible for \(t > \pi\); see the remark following \cite[Theorem 4.5]{adams2022metric}. 
    
        Thus, if \(J = [s,t] \subset (0, \tfrac{2\pi}{3})\), the structure map \(\VR_s(\rp^n) \to \VR_t(\rp^n)\) is a homotopy equivalence, implying that \(\invariant(\VR_\bullet(\rp^n))(J) = \invariant(\rp^n) > 2\). 
        If \(J = [s,t]\) is not contained in \([0,\pi]\), then \(t > \pi\). In this case, \(\VR_t(\rp^n)\) is contractible, and hence \(\invariant(\VR_\bullet(\rp^n))(J) = 0\).
    
        \item[(2)] For the wedge sum \(\wedgeS\) with the gluing metric, \(\VR_t(\wedgeS)\) is homotopy equivalent to \(\wedgeS\) for all \(t \in (0, \zeta_n)\).
        Indeed, \cite[Theorem~7.1]{lim2024vietoris} shows that \(\VR_t(\bbS^d)\) is homotopy equivalent to \(\bbS^d\) for \(t \in (0, \zeta_k)\), and \(\zeta_n\) is the smallest among \(\zeta_1, \dots, \zeta_n\).
        Thus, each factor \(\VR_t(\bbS^d)\) remains unchanged for \(t \in (0, \zeta_n)\), and \cite[Proposition~1]{adamaszek2020homotopy} implies that the natural inclusion \(\VR_t(X) \vee \VR_t(Y) \to \VR_t(X \vee Y)\) is a homotopy equivalence for all \(t\).

        Thus, if \(J \subset (0, \zeta_n)\), we have \(\invariant(\VR_\bullet(\wedgeS))(J) = \invariant(\wedgeS) = 2\).      If \(J = [s,t]\) is not contained in \([0, \pi]\), then \(t > \pi\). In this case, \(\VR_t(\wedgeS)\) is contractible, and hence \(\invariant(\VR_\bullet(\wedgeS))(J) = 0\).
        \qedhere
    \end{enumerate}
\end{proof}

We adapt techniques from \cite[Proposition~3.7]{memoli2024PersistentCup} to establish the following lemma in a slightly more abstract setting.
The original statement and proof concern functors arising from the persistent LS-category and cup-length invariants of a specific pair of metric spaces.
Our abstraction allows for broader applicability, including to the persistent invariants considered in this paper.

\begin{proposition}\label{prop:de_value}
    Let \(b > 0\) and let \(a_g, a_f\) be positive real numbers satisfying \(\tfrac{b}{2} < a_g < a_f < b\).
    Define two functors \(f, g: (\Int, \subseteq) \to (\bbR_{\geq 0}, \leq)\)  satisfying
    \[
    f(J) := 
    \begin{cases}
    n, & \text{if } J \subset (0,a_f),\\
    0, & \text{if } J \not\subset [0,b],
    \end{cases}
    \quad
    g(J) := 
    \begin{cases}
    m, & \text{if } J \subset (0,a_g),\\
    0, & \text{if } J \not\subset [0,b],
    \end{cases}
    \]
    where \(n > m > 0\) are constants.
    Then:
    \begin{enumerate}
        \item[(1)] \label{prop-item:lower bound for de} We have the lower bound
       $$
       \de(f, g) \geq \tfrac{a_f}{2}.
       $$
       \item[(2)] If, in addition, $f(J) = 0$ for all intervals $J$ with length greater than $a_f$, and $g(J) = 0$ for all $J$ with length greater than $a_g$, then
       $$
       \de(f, g) = \tfrac{a_f}{2}.
       $$
    \end{enumerate}
\end{proposition}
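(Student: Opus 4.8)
The plan is to work directly from the erosion-distance definition (\cref{def:de}), by (1) exhibiting a specific interval witnessing that $f$ and $g$ cannot be $\epsilon$-eroded for small $\epsilon$, and (2) under the extra hypothesis, checking that $\epsilon = \tfrac{a_f}{2}$ does make them $\epsilon$-eroded.

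For part (1): suppose $f$ and $g$ are $\epsilon$-eroded for some $\epsilon \geq 0$. The idea is to feed the condition $f(J) \geq g(J^{\epsilon})$ (where $J^{\epsilon} := [a-\epsilon, b+\epsilon]$ for $J = [a,b]$) an interval $J$ on which $f$ takes its large value $p$ but the eroded interval $J^\epsilon$ still lies inside $(0, a_g)$, forcing $p \geq q$ — which is true and gives nothing — so instead I must use the \emph{other} inequality cleverly, or exploit that $g$ only takes values in $\{q, 0, \dots\}$ and is $0$ on intervals not contained in $[0,b]$. The sharp constraint comes from the asymmetry $p > q$: take $J$ of the form $[\delta, a_f - \delta]$ for small $\delta > 0$, so $f(J) = p$; then $\epsilon$-erosion demands $g(J^\epsilon) \leq$ something, but the relevant direction is $g(J) \geq f(J^\epsilon)$, i.e. we need $g$ on a small interval to dominate $f$ on an eroded one. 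Choosing $J \subset (0, a_g)$ with $g(J) = q$, the condition $q = g(J) \geq f(J^{\epsilon}) = f([a-\epsilon, b+\epsilon])$ forces $f([a-\epsilon,b+\epsilon]) \leq q < p$, hence $[a-\epsilon, b+\epsilon] \not\subset (0, a_f)$ (otherwise $f$ would equal $p$). Letting $J = [\delta, a_g - \delta]$ and then $\delta \to 0$, we get that $[-\epsilon, a_g + \epsilon] \not\subset (0, a_f)$; since the left endpoint $-\epsilon \leq 0$ this is automatic — so this route also needs care near $0$. The cleanest fix is to push $J$ to the right: take $J = [a_g - \delta, a_g - \delta]$ (a degenerate interval, or a short one) still inside $(0, a_g)$, so $g(J) = q$; then $q \geq f([a_g - \delta - \epsilon, a_g - \delta + \epsilon])$. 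For this to be consistent with $f$ not equaling $p$ there, we need $[a_g - \delta - \epsilon, a_g - \delta + \epsilon] \not\subset (0, a_f)$, and as $\delta \to 0$ this forces $a_g + \epsilon \geq a_f$, i.e. $\epsilon \geq a_f - a_g$. That is not yet $\tfrac{a_f}{2}$, so I expect the decisive estimate actually comes from combining this with the hypothesis $\tfrac{b}{2} < a_g$: one instead takes an interval $J$ of length just under $a_f$, and uses that $f(J^\epsilon) = 0$ would require $J^\epsilon \not\subset [0,b]$, i.e. $b + \epsilon < $ (length of $J^\epsilon$) is impossible — rather $J^\epsilon = [-\epsilon, a_f - \delta + \epsilon]$ must fail to be $\subset [0,b]$ to allow $f(J^\epsilon)=0$, giving $a_f - \delta + \epsilon > b$... hmm, that gives $\epsilon > b - a_f$, again not $a_f/2$. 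The genuinely operative fact must be: $f(J) = p$ on $J \subset (0, a_f)$ forces, via $\epsilon$-erosion, $g(J^{-\epsilon}) \geq p > q$ where $J^{-\epsilon} := [a+\epsilon, b-\epsilon]$ — wait, erosion only shrinks in one direction. Re-reading \cref{def:de}: it is $\invariant_1([a,b]) \geq \invariant_2([a-\epsilon, b+\epsilon])$, so the inequality always compares $f$ on $J$ with $g$ on the \emph{enlarged} $J$. So $f(J) = p$ gives us $g(J^{+\epsilon}) \leq p$ (vacuous) and $g(J) \geq f(J^{+\epsilon})$. To get a contradiction for small $\epsilon$, I want an interval $J$ with $g(J) = q$ but $f(J^{+\epsilon}) = p$, i.e. $J \subset (0,a_g)$ and $J^{+\epsilon} = [a - \epsilon, b + \epsilon] \subset (0, a_f)$. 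Since $a - \epsilon$ can be made positive only if $a > \epsilon$; and $b + \epsilon < a_f$. So I need: exists $[a,b] \subset (0, a_g)$ with $a > \epsilon$ and $b + \epsilon < a_f$. As long as $\epsilon < a_g$ we can pick $a$ slightly bigger than $\epsilon$; then need $\epsilon + \epsilon' + \epsilon < a_f$ roughly, i.e. roughly $2\epsilon < a_f$. Taking $a \to \epsilon^+$, $b \to a^+$, the requirement becomes $2\epsilon \leq a_f$, and strict inequality $2\epsilon < a_f$ yields such $J$, contradicting $q \geq f(J^{+\epsilon}) = p$. Hence $\epsilon \geq \tfrac{a_f}{2}$, proving (1).

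For part (2): I must verify that $\epsilon := \tfrac{a_f}{2}$ makes $f, g$ $\epsilon$-eroded, i.e. check $f(J) \geq g(J^{+\epsilon})$ and $g(J) \geq f(J^{+\epsilon})$ for every $J = [a,b]$. The verification splits into cases on $J^{+\epsilon} = [a - \epsilon, b+\epsilon]$. If $J^{+\epsilon}$ is not contained in $[0,b_{\text{max}}]$ — I mean not in $[0,b]$ with $b$ the parameter from the hypothesis; let me call it $B$ to avoid clash — then $f(J^{+\epsilon}) = g(J^{+\epsilon}) = 0$ and both inequalities are trivially satisfied since $f, g \geq 0$. Otherwise $J^{+\epsilon} \subset [0, B]$. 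For the inequality $g(J) \geq f(J^{+\epsilon})$: if $f(J^{+\epsilon}) = 0$ we are done; if $f(J^{+\epsilon}) = p$ then $J^{+\epsilon} \subset (0, a_f)$, whence the length of $J$ is $b - a < a_f - 2\epsilon = 0$, a contradiction, so this subcase is empty; the remaining possibility is $0 < f(J^{+\epsilon}) < p$, but by the extra hypothesis $f$ vanishes on intervals of length $> a_f$ and equals $p$ on intervals $\subset (0, a_f)$ — here I need to be slightly careful whether $f$ can take intermediate values; the hypothesis as stated only pins down $f$ on $(0,a_f)$ and on intervals not in $[0,B]$ and (in (2)) on long intervals, so I should argue that any $J^{+\epsilon} \subset [0,B]$ with $f(J^{+\epsilon}) \neq 0$ must have length $\leq a_f$, hence contain... actually length $\le a_f$ does not force containment in $(0,a_f)$. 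I expect the clean argument is: $f(J^{+\epsilon}) \ne 0$ implies (by the long-interval hypothesis) $\operatorname{length}(J^{+\epsilon}) \le a_f$, so $\operatorname{length}(J) = \operatorname{length}(J^{+\epsilon}) - 2\epsilon \le a_f - a_f = 0$, forcing $J$ degenerate, $J = \{x\}$ with $x \in (0, a_g)$ or not; if $x \in (0,a_g)$, then... but I also need $f(J^{+\epsilon})$'s actual value. Here is where monotonicity of $f$ as a functor $(\Int,\subseteq) \to (\bbR_{\ge 0}, \le)$ — wait, it's contravariant-ish: larger interval, the functor to $(\bbR_{\ge0},\le)$; \cref{def:de} just treats them as arbitrary functors, but they come from persistent invariants so $J \subseteq J'$ implies $\invariant(J) \geq \invariant(J')$. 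Using that: $J \subseteq J^{+\epsilon}$ gives $f(J) \geq f(J^{+\epsilon})$, and $J^{+\epsilon} \subset (0,a_f)$ would give $f(J^{+\epsilon}) = p$; but since $\operatorname{length}(J^{+\epsilon}) \geq 2\epsilon = a_f$, $J^{+\epsilon} \not\subset (0, a_f)$, so that case doesn't arise. The hard part will be handling the intermediate/degenerate cases cleanly and making sure the symmetric inequality $f(J) \geq g(J^{+\epsilon})$ also goes through — there I use $p > q$ and the analogous length bound for $g$ (vanishing on intervals longer than $a_g < a_f = 2\epsilon$), so $g(J^{+\epsilon}) \ne 0$ forces $\operatorname{length}(J^{+\epsilon}) \le a_g < 2\epsilon$, impossible; hence $g(J^{+\epsilon}) = 0 \le f(J)$ always. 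Thus $\epsilon = \tfrac{a_f}{2}$ works, and combined with (1) gives $\de(f,g) = \tfrac{a_f}{2}$. I anticipate the main obstacle is purely bookkeeping: enumerating the finitely many cases for $J^{+\epsilon}$ relative to the thresholds $0$, $a_g$, $a_f$, $B$ and invoking at each step either monotonicity of the persistent invariant or the explicit case definitions of $f, g$ without gaps.
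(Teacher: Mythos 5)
Your part (1) eventually lands on the same argument as the paper: exhibit an interval $J\subset(0,a_g)$ with $g(J)=q$ whose $\epsilon$-thickening $J^\epsilon$ still lies in $(0,a_f)$, so that $g(J)=q<p=f(J^\epsilon)$ contradicts $\epsilon$-erosion. The paper centers $J$ at $\tfrac{a_f}{2}$; you push it near $\epsilon$. Both are fine, and both force $\epsilon\geq\tfrac{a_f}{2}$ in the limit. (The many abandoned attempts before you reach this, while honest, should be pruned in a final write-up.)

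Part (2), however, has a genuine gap. You try to show that $\epsilon=\tfrac{a_f}{2}$ \emph{itself} makes $f,g$ $\epsilon$-eroded, but the extra hypothesis only forces $f(J)=0$ when $\operatorname{length}(J)>a_f$ \emph{strictly}. With $\epsilon=\tfrac{a_f}{2}$ and a degenerate $J=\{x\}$ (with, say, $\tfrac{a_f}{2}\leq x<a_g$), $J^\epsilon=[x-\tfrac{a_f}{2},\,x+\tfrac{a_f}{2}]$ has length \emph{exactly} $a_f$, so the hypothesis does not give $f(J^\epsilon)=0$, nor is $J^\epsilon\subset(0,a_f)$, nor is $J^\epsilon\not\subset[0,b]$: the value $f(J^\epsilon)$ is simply unconstrained (a "gray" value), and it could exceed $q=g(J)$. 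You notice this yourself ("the hard part will be handling the intermediate/degenerate cases") but never resolve it, and in fact it cannot be resolved under the stated hypotheses — monotonicity of the functor does not close the gap either. The correct move, which is what the paper does, is to remember that $\de$ is an \emph{infimum}: it suffices to show $f,g$ are $\epsilon$-eroded for every $\epsilon>\tfrac{a_f}{2}$. Then $\operatorname{length}(J^\epsilon)\geq 2\epsilon>a_f>a_g$ for \emph{every} $J$, so both hypotheses apply and $f(J^\epsilon)=g(J^\epsilon)=0\leq g(J),f(J)$, immediately giving $\de(f,g)\leq\tfrac{a_f}{2}$.

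One smaller point: your repeated appeal to "$f(J^\epsilon)=p$ implies $J^\epsilon\subset(0,a_f)$" is a converse the hypotheses don't give you (they only pin down $f$ on $(0,a_f)$, on intervals escaping $[0,b]$, and — in part (2) — on long intervals). This is another reason the $\epsilon=\tfrac{a_f}{2}$ route fails; the $\epsilon>\tfrac{a_f}{2}$ route avoids having to say anything about where $f$ takes intermediate values.
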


\begin{proof}
    The functions \(f\) and \(g\) are illustrated in \cref{fig:functions_f_g}. 
    Suppose \(f\) and \(g\) are \(\epsilon\)-eroded, meaning \(f(J) \geq g(J^\epsilon)\) and \(g(J) \geq f(J^\epsilon)\) for all \(J \in \Int\).
    Fix \(\delta > 0\) small and consider \(J = [\tfrac{a_f}{2}-\delta, \tfrac{a_f}{2}+\delta]\subset (0, a_g)\).
    Then for any \(0 < t < \tfrac{a_f}{2} - 2\delta\), we have
    \[
    J^t = [\tfrac{a_f}{2}-\delta-t, \tfrac{a_f}{2}+\delta+t] \subset (0,a_f) \implies
    g(J) = m < n = f(J^t).
    \]
    Thus, for \(g(J) \geq f(J^\epsilon)\) to hold, we must have \(\epsilon \geq \tfrac{a_f}{2}\), proving \(\de(f,g) \geq \tfrac{a_f}{2}\).

    Now assume that \(f(J) = 0\) whenever the length of \(J\) exceeds \(a_f\) and similarly for \(g\).
    We show \(\de(f,g) \leq \tfrac{a_f}{2}\).
    Let \(\epsilon > \tfrac{a_f}{2}\) be arbitrary.
    Given any interval \(J \subset [0,b]\), the \(\epsilon\)-thickening \(J^\epsilon\) has length at least \(2\epsilon > a_f\), and thus \(f(J^\epsilon) = g(J^\epsilon) = 0\).
    Therefore,
    \[
    f(J^\epsilon) \leq g(J) \quad \text{and} \quad g(J^\epsilon) \leq f(J)
    \quad \text{for all } J \in \Int.
    \]
    Hence, \(f\) and \(g\) are \(\epsilon\)-eroded for every \(\epsilon > \tfrac{a_f}{2}\), showing that \(\de(f,g) \leq \tfrac{a_f}{2}\).
    Thus, \(\de(f,g) = \tfrac{a_f}{2}\)
\end{proof}

\begin{figure}[H]
\centering
    \begin{tikzpicture}[scale=0.65]
    \begin{axis} [ 
    title = {\Large $f:\Int\to \bbR$},
    ticklabel style = {font=\Large},
    axis y line=middle, 
    axis x line=middle,
    ytick={0.5,0.7 ,0.95},
    yticklabels={$\tfrac{b}{2}$, $a_f$,$b$},
    xtick={0.5,0.7 ,0.95},
    xticklabels={$\tfrac{b}{2}$,$a_f$, $b$},
    xmin=0, xmax=1.1,
    ymin=0, ymax=1.1,]
   \addplot [mark=none,color=dgmcolor!40!white] coordinates {(0,0) (1,1)};
    \addplot [thick,color=dgmcolor!40!white,fill=dgmcolor!40!white, 
                    fill opacity=0.45]coordinates {
            (0,0.7 ) 
            (0,0)
            (0.7 ,0.7 )
            (0,0.7 )};
    \addplot [thick,color=black!10!white,fill=black!10!white, 
                    fill opacity=0.4]coordinates {
            (0 ,0.95)
            (0.95,0.95)
            (0.7 ,0.7 )
            (0 ,0.7)};
    \node[mark=none] at (axis cs:.25,.45){\Large{$n$}};
    \end{axis}
    \end{tikzpicture}
    \hspace{1.5cm}
    \begin{tikzpicture}[scale=0.65]
    \begin{axis} [ 
    title = {\Large $g:\Int\to \bbR$},
    ticklabel style = {font=\Large},
    axis y line=middle, 
    axis x line=middle,
    ytick={0.5,0.7 ,0.95},
    yticklabels={$\tfrac{b}{2}$,$a_f$,$b$},
    xtick={0.5,0.6,0.7 ,0.95},
    xticklabels={$\tfrac{b}{2}$,$a_g$, $a_f$, $b$},
    xmin=0, xmax=1.1,
    ymin=0, ymax=1.1,]
    \addplot [mark=none] coordinates {(0,0) (1,1)};
    \addplot [thick,color=dgmcolor!20!white,fill=dgmcolor!20!white, 
                    fill opacity=0.45]coordinates {
            (0,0.6)
            (0,0)
            (0.6,0.6)
            (0,0.6)};
    \addplot [thick,color=black!10!white,fill=black!10!white, 
                    fill opacity=0.4]coordinates {
            (0 ,0.95)
            (0.95,0.95)
            (0.6 ,0.6 )
            (0 ,0.6)};
    \node[mark=none] at (axis cs:.25,.45){\Large{$m$}};
    \end{axis}
    \end{tikzpicture}
\caption{The functions $f$ (left) and $g$ (right).
In each figure, white indicates function value 0, and gray represents unspecified values.} 
\label{fig:functions_f_g}
\end{figure}

\begin{proof}[Proof of \cref{prop:erosion-comp}]
    The first inequality in \cref{eq:erosion-comp} follows from \cref{lem:TC_zcup_RPn_wedge} and \cref{prop:de_value} Item (1).
    The second inequality follows from \cref{prop:stability of TC and zcup}.
\end{proof}

\begin{remark}[Using LS-category and cup-length]
    \label{rmk:same_for_cat}
    It is known that $\cat(\rp^n) = n$ for all \(n\) (see, for instance, \cite[Theorem 3]{oprea2014applications}). 
    Using \cref{prop:properties of cat} Item \ref{prop-item:cat wedge}, we also have $\cat(\wedgeS) = 1$.
    Let $n\geq 2$.
    Thus, by an argument analogous to that in \cref{lem:TC_zcup_RPn_wedge}, we have:
    \begin{enumerate}
        \item[(1)] 
        \(
        \cat(\VR_\bullet(\rp^n))(J) = 
        \begin{cases}
        \cat(\rp^n) = n >1, & \text{if } J \subset (0, \tfrac{2\pi}{3}),\\
        0, & \text{if } J \not\subset [0,  \pi].
        \end{cases}
        \)  
        \item[(2)] 
        \(
        \cat(\VR_\bullet(\wedgeS))(J) = 
        \begin{cases}
        \cat(\wedgeS)=1, & \text{if } J \subset (0, \zeta_n),\\
        0, & \text{if } J \not\subset [0, \pi].
        \end{cases}
        \)
    \end{enumerate}
    In parallel, using mod 2 coefficients, we have:   
    \begin{enumerate}
        \item[(1)] 
        \(
        \cl(\VR_\bullet(\rp^n))(J) = 
        \begin{cases}
        \cl(\rp^n)=2, & \text{if } J \subset (0, \tfrac{2\pi}{3}),\\
        0, & \text{if } J \not\subset [0,  \pi].
        \end{cases}
        \)  
        \item[(2)] 
        \(
        \cl(\VR_\bullet(\wedgeS))(J) = 
        \begin{cases}
        \cl(\wedgeS)=1, & \text{if } J \subset (0, \zeta_n),\\
        0, & \text{if } J \not\subset [0, \pi].
        \end{cases}
        \)
    \end{enumerate}
    Thus, the left-hand side of \cref{fig:tc_zcl} also represents $\cat(\VR_\bullet(\rp^n))$ (or $\cl(\VR_\bullet(\rp^n))$) if the values labeled `$>2$' are replaced by `$>1$,' and the right-hand side similarly represents $\cat(\VR_\bullet(\wedgeS))$ (or $\cl(\VR_\bullet(\wedgeS))$) when `$2$' is replaced by `$1$.'
    
    As a result, the comparison in \cref{eq:erosion-comp} remains valid when $n\geq 2$ and $\invariant = \cat$ or $\cl$.
    This shows that both the persistent LS-category and persistent cup-length also distinguishes $\rp^n$ from $\wedgeS$, providing a strictly stronger lower bound on their Gromov--Hausdorff distance than persistent homology alone.
\end{remark}

Now that we have seen examples where not only $\TC$ and $\zcup$, but also $\cat$ and $\cl$, distinguish spaces more effectively than persistent homology, it is natural to ask how these invariants compare to each other. 

\begin{example}
\label{ex:tc zcl cat cl}
We provide examples in the static setting that illustrate differences in the discriminative powers of different invariants. 
These immediately yield persistent examples via constant filtrations, though such constructions do not capture geometric information. 

Using results from \cref{prop:tc of a space}~(f), \cref{ex:tc of bouquets}, and \cref{ex:zcup of Sn}, we obtain the values in the first two columns of \cref{table:values of invariants}, except for $\zcup(\bbS^1 \vee \bbS^1)$.
The fact that $\zcup(\bbS^1 \vee \bbS^1) = 1$ follows directly from the definition of $\zcup$ and the structure of the cohomology ring of a wedge sum, which decomposes as a direct sum with all cup products between elements from distinct summands vanishing identically.
The values in the last two columns of \cref{table:values of invariants} follow directly from the definitions of $\cat$ and $\cl$.
\begin{table}[h]
    \centering
    \begin{tabular}{|c|cccc|}
    \hline
     & $\TC$ & $\zcup$ & $\cat$ & $\cl$ \\
    \hline
    $\bbS^1$ & 1 & 1 & 1 & 1 \\
    $\bbS^2$ & 2 & 2 & 1 & 1 \\
    $\bbS^1 \vee \bbS^1$ & 2 & 1 & 1 & 1 \\
    \hline
    \end{tabular}
    \caption{Values of various invariants for selected spaces.}
    \label{table:values of invariants}
\end{table}

The values in \cref{table:values of invariants} illustrate the varying discriminative power of the invariants:
\begin{itemize}
    \item Comparing $\bbS^1$ and $\bbS^2$, both $\TC$ and $\zcup$ distinguish the pair, while $\cat$ and $\cl$ do not.
    \item Comparing $\bbS^1$ and $\bbS^1 \vee \bbS^1$, $\TC$ distinguishes the pair, while the other three invariants do not.
    \item Comparing $\bbS^2$ and $\bbS^1 \vee \bbS^1$, $\zcup$ distinguishes the pair, while the other three invariants do not.
\end{itemize}
\end{example}

This example, together with \cite[Example 2.34]{memoli2024PersistentCup}, provides indication that none of the four invariants—$\TC$, $\zcup$, $\cat$, or $\cl$—is uniformly more discriminative than the others. 
While we do not undertake a complete pairwise analysis of these invariants, the examples considered indicate that each invariant captures different aspects of topological information. 
Since this paper focuses on $\TC$ and $\zcup$, we emphasize cases where they are more discriminative and carry out a direct comparison between them. 
A comprehensive analysis involving all four invariants is left for future work.

Also, constructing Vietoris--Rips filtrations that reflect these distinctions through carefully chosen metrics is a natural next step, which we leave open for interested readers. While such constructions are not always difficult, they may require careful geometric design and are best addressed case by case.

\printbibliography

\end{document}